\tikzset{%
    symbol/.style={%
        draw=none,
        every to/.append style={%
            edge node={node [sloped, allow upside down, auto=false]{$#1$}}}
    }
}
\newtheorem{lemma}{Lemma}
\newtheorem{thm}{Theorem}
\newtheorem{prop}{Proposition}
\newtheorem{coro}{Corollary}
\newtheorem*{conv}{Convention}
\theoremstyle{definition}
\newtheorem{defn}{Definition}
\newtheorem{example}{Example}
\newtheorem{remark}{Remark}
\newcommand{\C}{\mathcal{C}}
\newcommand{\E}{\mathcal{E}}
\newcommand{\F}{\mathcal{F}}
\newcommand{\U}{\mathbb{U}}
\newcommand{\V}{\mathbb{V}}
\newcommand{\W}{\mathbb{W}}
\newcommand{\X}{\mathbb{X}}
\newcommand{\Y}{\mathbb{Y}}
\newcommand{\Z}{\mathbb{Z}}
\newcommand{\Xc}{\mathcal{X}}
\newcommand{\RG}{\mathbf{RG}}
\newcommand{\Grpd}{\mathbf{Grpd}}
\newcommand{\Span}{\mathbf{Span}}
\newcommand{\Conn}{\mathbf{Conn}}
\newcommand{\Grp}{\mathbf{Grp}}
\newcommand{\Ab}{\mathbf{Ab}}
\newcommand{\Eq}{\mathbf{Eq}}
\newcommand{\Comp}{\mathbf{Comp}}
\newcommand{\Ext}{\mathbf{Ext}}
\newcommand{\CExt}{\mathbf{CExt}}
\newcommand{\Lie}{\mathbf{Lie}}
\newcommand{\XMod}{\mathbf{XMod}}
\newcommand{\PXMod}{\mathbf{PXMod}}
\newlist{thmlist}{enumerate}{1}
\setlist[thmlist]{label=(\arabic{thmlisti}), ref=\thethm.(\arabic{thmlisti}),noitemsep}
\Crefname{thm}{Theorem}{Theorems}
\Crefname{lemma}{Lemma}{Lemmas}
\Crefname{prop}{Proposition}{Propositions}
\Crefname{coro}{Corollary}{Corollaries}
\Crefname{defn}{Definition}{Definitions}
\Crefname{counter}{Counterexample}{Counterexamples}
\Crefname{example}{Example}{Examples}
\begin{document}
\title{Higher commutator conditions for extensions in Mal'tsev categories}
\author{Arnaud Duvieusart}
\thanks{The first author is a Research Fellow of the Fonds de la Recherche Scientifique-FNRS}
\author{Marino Gran}
\address{Institut de Recherche en Math\'ematique et Physique, Universit\'e Catholique de Louvain, Chemin du Cyclotron 2,
1348 Louvain-la-Neuve, Belgium}

\email{arnaud.duvieusart@uclouvain.be}
\email{marino.gran@uclouvain.be}
\begin {abstract}
We define a Galois structure on the category of pairs of equivalence relations in an exact Mal'tsev category, and characterize central and double central extensions in terms of higher commutator conditions. These results generalize both the ones related to the abelianization functor in exact Mal'tsev categories, and the ones corresponding to the reflection from the category of internal reflexive graphs to the subcategory of internal groupoids. Some examples and applications are given in the categories of groups, precrossed modules, precrossed Lie algebras, and compact groups.
\end {abstract}


\date{\today}

\maketitle

\section*{Introduction}
During the last thirty years categorical Galois theory has provided several new insights into the study of (higher) central extensions of groups, Lie algebras, precrossed modules and general algebraic varieties of universal algebras \cite{J91, JK94, JK97, EG06, EGV, E14}. An important idea in this approach, that is also related to the work of Fr\"{o}hlich \cite{Fro} in the special case of varieties of $\Omega$-groups (i.e. groups with multiple operators in the sense of Higgins), is to define a notion of \emph{centrality} in a variety of algebras $\C$ that is \emph{relative} to the choice of a subvariety $\Xc$ of $\C$, with reflector $I \colon \C \rightarrow \Xc$, left adjoint to the inclusion functor  $\Xc \rightarrow \C$. 
One calls a homomorphism $f \colon X \rightarrow Y$ in the variety $\C$ an \emph{extension} when it is \emph{surjective}; if $\eta \colon 1_{\mathcal C} \rightarrow I$ denotes the unit of the adjunction, then such an extension $f \colon X \rightarrow Y$ is \emph{$\Xc$-trivial} when the naturality square 
\begin{equation}\begin{tikzcd}[ampersand replacement=\&,column sep=huge] X\ar[d,"f"']\ar[r,"\eta_X"] \& IX \ar[d,"I(f)"] \\ Y \ar[r,"\eta_Y"'] \& IY \end{tikzcd}\end{equation}
is a pullback. It is called an $\Xc$-\emph{central extension} when there exists a surjective homomorphism $p \colon E \to Y$ such that the homomorphism $\pi_1\colon E \times_Y X \to E$ in the pullback 
\begin{equation}\begin{tikzcd}[ampersand replacement=\&,column sep=huge] E \times_Y X\ar[d,"\pi_1"']\ar[r,"\pi_2"] \& X \ar[d,"f"] \\ E \ar[r,"p"'] \& Y \end{tikzcd}\end{equation}
 is a trivial extension. It turns out that, in the special case of the reflection $ab \colon \Grp \to \Ab$ from the variety $\Grp$ of groups to its subvariety $\Ab$ of abelian groups, a surjective group homomorphism $f \colon X \rightarrow Y$ is $\Ab$-\emph{central} exactly when its kernel is contained in the center of $X$ or, equivalently, when the commutator condition $[Ker(f), X] = \{ 1\}$ holds.
Similar results hold in other varieties of algebras, such as the variety $B$-$\mathbf{PXMod}$
 of $B$-\emph{precrossed modules}, i.e. precrossed modules with the same codomain $B$, where morphisms $f \colon (X, \alpha) \to (Y, \alpha')$ are action preserving group homomorphisms making the triangle  
 \[\begin{tikzcd}[ampersand replacement=\&] X \ar[rr,"f"] \ar[dr,"{\alpha}" ']  \& \& Y \ar[dl,"\alpha'"] \\  \& B \&\end{tikzcd}\]
 commute \cite{CE89}.
 As shown in \cite{EG06} an extension $f \colon (X, \alpha) \to (Y, \alpha')$ of $B$-precrossed modules 
 in the variety $B$-$\mathbf{PXmod}$ is $\Xc$-central relatively to the subvariety $\Xc$ = $B$-$\mathbf{Xmod}$ of $B$-crossed modules if and only if the \emph{Peiffer commutator} condition $\langle Ker(f), X \rangle = \{1\}$ holds. A much more general result involving the Smith commutator of congruences was established in \cite{EG06} in the context of Mal'tsev varieties \cite{S76}, namely those varieties of universal algebras whose algebraic theory contains a ternary term $p(x,y,z)$ such that $p(x,y,y)=x$ and $p(x,x,y) = y$. For instance, the term $p(x,y,z) = x \cdot y^{-1} \cdot z$ in the algebraic theory of groups shows that the variety $\Grp$ of groups is a Mal'tsev variety.
The central extensions of reflexive graphs in a Mal'tsev variety $\C$ 
\begin{equation}\begin{tikzcd}[ampersand replacement=\&] X_1 \ar[rr,"f"] \ar[dr,shift left,"d"] \ar[dr,shift right,"c"'] \& \& Y_1 \ar[dl,shift left,"d'"] \ar[dl,shift right,"c'"'] \\  \& B \&\end{tikzcd}\label{extensiongraph}\end{equation}
 in the category $\RG(\C)/B$ of reflexive graphs in a Mal'tsev variety $\C$ (over a fixed $B$ in $\C$) were shown to be central with respect to the reflection to the subcategory $\Grpd(\C)/B$ of groupoids in $\C$ (over $B$) exactly when the \emph{commutator condition}
\[ [Eq[f],Eq[c]\vee Eq[d]]=\Delta_{X_1} \]
holds. Here $c$ and $d$ are the ``codomain'' and the ``domain'' homomorphisms, $Eq[c]\vee Eq[d]$ is the supremum of the congruences $Eq[c]$ and $Eq[d]$ (occuring as the kernel pairs of $c$ and $d$, respectively) and $\Delta_{X_1}$ is the smallest congruence on the algebra $X_1$.
By using the known equivalence between the categories of internal reflexive graphs and of internal groupoids in the variety of groups with the categories of precrossed modules and crossed modules, respectively \cite{BS, LR80} one easily gets the result concerning $B$-precrossed modules recalled above as a very special case of this one.

In this article we always work in a general exact Mal'tsev category \cite{CLP91} (with coequalizers), so that all our results hold not only in any Mal'tsev variety of algebras, but also in many other categories, such as the categories of compact Hausdorff Mal'tsev algebras, of ${\mathbb C}^*$-algebras \cite{GRos}, and the dual category of any elementary topos \cite{CKP93}, for instance. In this context there is a good theory of commutators of equivalence relations \cite{P95} that we briefly recall in Section \ref{preliminaries}.
We then provide a conceptual proof of a general result that yields, in particular, the characterization of central extensions of reflexive graphs recalled above in the more general context of exact Mal'tsev categories (see Theorem \ref{Main2}). The case when $\C$ is the category of compact Hausdorff groups is explained in detail in Example \ref{compact}. We then study the so-called \emph{double central extensions} in the general context of exact Mal'tsev categories, and particularize our results to deduce a precise characterization of the double central extensions relative to the reflection $\RG(\C)/B \to \Grpd(\C)/B$  in terms of a commutator condition involving the categorical commutator of equivalence relations (see Corollary \ref{doublecentralgraph}). Our general result also includes the characterization of double central extensions relative to the abelianization functor previously considered by T. Everaert and T. Van der Linden in \cite{EV10} (see Corollary \ref{double-abelian}). Some applications in the special case of central extensions of precrossed Lie algebras are also given (\Cref{Lie}).

In a recent article A. Cigoli, S. Mantovani and G. Metere have investigated a categorical notion of Peiffer commutator that is certainly related to the results presented in the present work, in the special case of semi-abelian categories. In the future it would be interesting to compare the categorical commutator conditions arising from the reflection $\RG(\C)/B \to \Grpd(\C)/B$ considered in our work with this new notion of Peiffer commutator \cite{CMM}.

\section{Preliminaries}\label{preliminaries}
A \emph{Mal'tsev category} \cite{CLP91} is a finitely complete category $\C$ having the property that any reflexive relation in $\C$ is an equivalence relation. In this paper we shall always assume that $\C$ is a regular Mal'tsev category, so that it will be possible to consider the direct image $f(R)$ of any equivalence relation $R$ on $X$ along an arrow $f : X \to X'$.

If $\Eq(\C)$ denotes the category of (internal) equivalence relations in $\C$, we write $2$-$\Eq(\C)$ for the category whose objects are triples $(X,R,S)$ where $R$ and $S$ are two equivalence relations on an object $X$, and arrows $(X,R,S)\to (X',R',S')$ are those arrows $f \colon X\to X'$ in $\C$ such that $f(R)\leq R'$ and $f(S)\leq S'$. This means that there exist arrows $f_R:R\to R'$ and $f_S:S\to S'$ in $\C$ such that the following diagram commutes:

\begin{equation}\begin{tikzcd}[ampersand replacement=\&] R \ar[r,shift left]\ar[r,shift right] \ar[d,"f_R"'] \& X \ar[d,"f"] \& \ar[l,shift left]\ar[l,shift right] S \ar[d,"f_S"]\\
R' \ar[r,shift left]\ar[r,shift right] \& X' \& \ar[l,shift left]\ar[l,shift right] S' \end{tikzcd}\label{eq:2_equiv}\end{equation}

One can check that limits in $2$-$\Eq(\C)$ are just pointwise limits in $\C$, and the same holds for regular epimorphisms; in other words, regular epimorphisms in $2$-$\Eq(\C)$ are regular epimorphisms $f \colon X\to X'$ such that $f(R)=R'$ and $f(S)=S'$. The categories 
$\Eq(\C)$ and $2$-$\Eq(\C)$ are easily seen to be regular Mal'tsev categories whenever $\C$ is a regular Mal'tsev category \cite{BG03}.

As in \cite{JK94}, we denote $\C \downarrow X$ the full subcategory of the usual comma category $\C/X$ whose objects are the regular epimorphisms with codomain $X$. Then, since regular epimorphisms are pullback stable in the regular category $\mathcal C$, any $f:X\to Y$ induces a change-of-base functor $f^*:\C\downarrow Y\to \C\downarrow X$ defined by pulling back along $f$. When $f$ is a regular epi, this functor $f^*$ has a left adjoint $f_!$ defined by composition with $f$. When $\C$ is (Barr) exact, any regular epimorphism $f$ is an effective descent morphism, in the sense that $f^*$ is monadic; the same holds for the category $\Eq(\C)$ of equivalence relations in $\C$ (\cite{JS11}). Because the two equivalence relations in our definition of $2$-$\Eq(\C)$ do not interact when forming limits or coequalizers, the same is true in $2$-$\Eq(\C)$.

Given two equivalence relations $(R,r_1,r_2)$ and $(S,s_1,s_2)$ on the same object $X$ in $\C$, a \emph{double equivalence relation} on $R$ and $S$ is an equivalence relation in $\Eq(\C)$, depicted as
\begin{equation}\begin{tikzcd}[ampersand replacement=\&] C \ar[r,shift left,"q_1"]\ar[r,shift right,"q_2"'] \ar[d,shift left,"p_2"]\ar[d,shift right,"p_1"'] \& S \ar[d,shift left,"s_2"]\ar[d,shift right,"s_1"'] \\ R \ar[r,shift left,"r_1"]\ar[r,shift right,"r_2"'] \& X, \end{tikzcd}\label{eq:double_relation}\end{equation}
where $(C,p_1,p_2)$ is an equivalence relation on $R$, $(C,q_1,q_2)$ is an equivalence relation on $S$, and $r_ip_j=s_jq_i$ for all $i,j\in \{1,2\}$. A double equivalence relation on $R$ and $S$ is said to be \emph{centralizing} if any of the commutative squares in diagram \eqref{eq:double_relation} is a pullback (and in this case all of them will be). Two equivalence relations $R$ and $S$ are said to \emph{centralize each other} (or to be \emph{connected}) if they have a centralizing relation. We shall write $R\square S$ for the largest double equivalence relation on $R$ and $S$ (this latter can be constructed as a limit of a suitable diagram \cite{JP01}).

In a finitely complete category it is often useful to denote ``generalized elements'' of a double relation as $\left(\begin{smallmatrix} a & b \\ d & c\end{smallmatrix}\right)$, where $a,b,c,d$ in $X$ are such that $aRb$, $dRc$, $aSd$, $bSc$. With this notation, a double relation $C$ is centralizing if for all $a,b,c$ such that $aRb$ and $bSc$, there exists a unique $d$ such that $\left(\begin{smallmatrix} a & b \\ d & c\end{smallmatrix}\right)\in C$.

We shall consider the pullback
\[\begin{tikzcd}[ampersand replacement=\&] R\times_X S \ar[r,"\pi_2"]\ar[d,"\pi_1"]\& S \ar[d,"s_1"]\\ R\ar[r,"r_2"'] \& X. \end{tikzcd}\]
When $\C$ is a regular Mal'tsev category, then the following conditions are equivalent (see \cite{BG03,CPP92,P95,P96}):
\begin{itemize}
\item $R$ and $S$ have a unique centralizing relation;
\item there exists a unique arrow $\beta:R\times_X S\to R\square S$ such that the diagram
\[\begin{tikzcd}[ampersand replacement=\&] R\square S \ar[drr,bend left,shift left,"q_2"]\ar[ddr,bend right,shift left,"p_1"] \ar[dr,shift left,"\alpha"] \& \& \\
\& R\times_X S \ar[r,"\pi_2",shift left]\ar[d,"\pi_1",shift left] \ar[ul,shift left,"\beta"]\& S \ar[l,shift left,"\sigma_2"] \ar[d,shift left,"s_1"]\ar[ull,bend right,shift left,"\delta_Q"] \\
\& R \ar[r,shift left,"r_2"] \ar[uul,bend left, shift left,"\delta_P"] \ar[u,shift left,"\sigma_1"] \& X \ar[l,shift left,"\delta_R"] \ar[u,shift left,"\delta_S"]\end{tikzcd}\]
commutes, where
\[\alpha \beta= id_{R\times_X S}, \quad \pi_1\sigma_1=id_R, \quad \pi_2\sigma_2=id_S,\]
and the arrow $\delta_R$ (resp. $\delta_S$, $\delta_P$, $\delta_Q$) is the diagonal of the relation $R$ (resp. $S$, $P=(R\square S,p_1,p_2)$, $Q=(R\square S ,q_1,q_2)$), that exists since $R\square S$ is a double equivalence relation. 
\item there exists a unique partial Mal'tsev operation $p:R\times_X S \to X$, called a \emph{connector} in \cite{BG02,BG03}, satisfying 
\[p(x,y,y)=x \text{ and } p(y,y,z)=z\]
\end{itemize}
In addition, the connector $p$ has the property that $xSp(x,y,z)$ and $zRp(x,y,z)$, and also satisfies the "associativity axiom" $p(p(x,y,z),u,v)=p(x,y,p(z,u,v))$ whenever it makes sense. Thus if $\left(\begin{smallmatrix} a & b \\ d & c\end{smallmatrix}\right)$ is an element of $R\square S$, then $(p(a,b,c),d)\in R\wedge S$, and the following square is a pullback 

\[\begin{tikzcd}[ampersand replacement=\&] R \square S \ar[d,"q"'] \ar[r,"\alpha"] \& R\times_{X} S \ar[d,"p"] \\
R\wedge S \ar[r,"p_1"'] \& X,
\end{tikzcd}\]
where $\alpha \colon R \square S \rightarrow R\times_{X} S$ associates $(a,b,c)$ with any $\left(\begin{smallmatrix} a & b \\ d & c\end{smallmatrix}\right)$ in $R\square S$.
Let us denote $\Conn(\C)$ the full subcategory of $2$-$\Eq(\C)$ consisting of triples $(X,R,S)$ where $R$ and $S$ are equivalence relations centralizing each other. It is proved in \cite{BG03} that when $\C$ is a regular Mal'tsev category, the subcategory $\Conn(\C)$ is closed in $2$-$\Eq(\C)$ under regular quotients and subobjects in $\C$.

In an exact Mal'tsev category $\C$ with coequalizers, M.C. Pedicchio introduced in \cite{P95} a definition of commutator of two equivalence relations $R$ and $S$ on the same object $X$, which we denote $[R,S]$, extending the Smith commutator of congruences (which is defined for varieties of universal algebras \cite{S76}). This categorical commutator can be characterized as the smallest equivalence relation on $X$ whose coequalizer $q$ is such that $q(R)$ and $q(S)$ centralize each other. In particular, it has the following properties for all equivalence relations $R,S,T$ on the same object $X$ (see \cite{P95,P96}):
\begin{enumerate}
\item $[R,S]=[S,R]$;
\item $[R,S]\leq R\wedge S$;
\item if $S\leq T$, then $[R,S]\leq [R,T]$;
\item $[R,S\vee T]=[R,S]\vee [R,T]$;
\item for any regular epimorphism $f$, $[f(R),f(S)]=f([R,S])$.
\end{enumerate}

This characterization of the commutator implies that $\Conn(\C)$ is a reflective subcategory of the regular Mal'tsev category $2$-$\Eq(\C)$, and the $(X,R,S)$-component of the unit of the corresponding reflection is induced by the canonical quotient $X \to \frac{X}{[R,S]}$ of $X$ by the equivalence relation $[R,S]$. $\Conn(\C)$ is thus a \emph{Birkhoff subcategory} (in the sense of \cite{JK94}) of $2$-$\Eq(\C)$. In general, for a full reflective subcategory $\mathcal{X}$ of a regular category $\C$, with reflector $I \colon \C \rightarrow \mathcal{X}$, the property of being stable under subobjects and quotients is equivalent to the fact that for any regular epimorphism $f:X\to Y$ of $\C$ the naturality square
\begin{equation}\label{eq:square}\begin{tikzcd}[ampersand replacement=\&,column sep=huge] X\ar[d,"f"']\ar[r,"\eta_X"] \& IX \ar[d,"I(f)"] \\ Y \ar[r,"\eta_Y"'] \& IY \end{tikzcd}\end{equation}
induced by the units of the adjunction is a pushout of regular epimorphisms in $\C$ (see Proposition $3.1$ in \cite{JK94}). When $\C$ is an exact Mal'tsev category, this is equivalent to the fact that \eqref{eq:square} is a \emph{regular pushout} (by Theorem $5.7$ in \cite{CKP93}): this means that all the arrows in the diagram 
\[\begin{tikzcd}[ampersand replacement=\&] X \ar[drr,bend left,shift left,"\eta_X"] 
\ar[ddr,"f"',bend right] \ar[dr,shift left,"\alpha"] \& \& \\
\& Y \times_{IY} IX  \ar[r,"p_2",shift left]\ar[d,"p_1",shift left] \& IX  \ar[d,shift left,"I(f)"] \\
\& Y\ar[r,"\eta_Y"']  \& IY, \end{tikzcd}\]
where $(Y \times_{IY} IX, p_1, p_2)$ is the pullback of $\eta_Y$ and $I(f)$, are regular epimorphisms. In general, however, the square \eqref{eq:square} being a regular pushout is a stronger property than the one of being a pushout of regular epimorphisms. If $\mathcal E$ is the class of regular epimorphisms such a full reflective subcategory is called a \emph{strongly $\mathcal E$-Birkhoff subcategory} \cite{EGV}.

\begin{conv} From now on $\C$ will always denote an exact Mal'tsev category with coequalizers.\end{conv}

\begin{prop} If $f:(X,R,S)\to (X',R',S')$ is a regular epimorphism in $2$-$\Eq(\C)$, then the square
\[\begin{tikzcd}[ampersand replacement = \& ] X\ar[r,"q_{[R,S]}"]\ar[d,"f"'] \& \frac{X}{[R,S]} \ar[d,"\bar{f}"] \\ X' \ar[r,"q_{[R,S]}"'] \& \frac{X'}{[R',S']}
\end{tikzcd}\]
induces a regular pushout in $2$-$\Eq(\C)$.
The category $\Conn(\C)$ is then a strongly $\mathcal E$-Birkhoff subcategory of $2$-$\Eq(\C)$.
\label{prop:Birkhoff}\end{prop}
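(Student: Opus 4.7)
The plan is to present the proposition as a direct consequence of two general facts already recalled in the preliminaries: Proposition~3.1 of \cite{JK94}, which characterises subobject/quotient-closed reflective subcategories of regular categories by the naturality square being a pushout of regular epimorphisms, and Theorem~5.7 of \cite{CKP93}, which in exact Mal'tsev categories upgrades any such pushout of regular epis to a regular pushout.

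First I would make sense of the map $\bar f$ on the right of the square. Since $f$ is a regular epimorphism in $2$-$\Eq(\C)$ it satisfies $f(R)=R'$ and $f(S)=S'$; property~(5) of the commutator then yields
\[f([R,S])=[f(R),f(S)]=[R',S'],\]
so that $q_{[R',S']}\circ f$ coequalises $[R,S]$ and factors uniquely through $q_{[R,S]}$ as a regular epimorphism $\bar f$. Equipping $X/[R,S]$ with the direct images $q_{[R,S]}(R)$ and $q_{[R,S]}(S)$, and $X'/[R',S']$ with $q_{[R',S']}(R')$ and $q_{[R',S']}(S')$, the displayed square becomes exactly the naturality square of the reflection $2$-$\Eq(\C)\to\Conn(\C)$ at the morphism $f$.

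Now I would invoke the general machinery. Because $\Conn(\C)$ is closed in $2$-$\Eq(\C)$ under regular quotients and subobjects (as recalled from \cite{BG03}), Proposition~3.1 of \cite{JK94} applied inside the regular category $2$-$\Eq(\C)$ shows that this naturality square is a pushout of regular epimorphisms in $2$-$\Eq(\C)$. Since $2$-$\Eq(\C)$ inherits the exact Mal'tsev property from $\C$, Theorem~5.7 of \cite{CKP93} then upgrades this to a regular pushout in $2$-$\Eq(\C)$, which is the first assertion; the second assertion is then just the definition of strongly $\mathcal E$-Birkhoff subcategory applied to $\Conn(\C)\subset 2$-$\Eq(\C)$.

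The main obstacle, and the only step genuinely specific to this situation, is the compatibility $f([R,S])=[R',S']$: it combines property~(5) of the commutator with the stronger notion of regular epimorphism available in $2$-$\Eq(\C)$, namely that $f(R)=R'$ and $f(S)=S'$. If one preferred to avoid the abstract reduction, the alternative would be to verify the regular pushout property by hand using that limits and regular epimorphisms in $2$-$\Eq(\C)$ are computed componentwise, thereby reducing the regular-epi-ness of the comparison map $X\to X'\times_{X'/[R',S']} X/[R,S]$ to three applications of the exact Mal'tsev property in $\C$: one on the underlying object, and one on each of the two relations $R$ and $S$.
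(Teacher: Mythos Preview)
Your main line of argument has a genuine gap: you apply Theorem~5.7 of \cite{CKP93} to the ambient category $2\text{-}\Eq(\C)$, asserting that it ``inherits the exact Mal'tsev property from $\C$''. The preliminaries only establish that $2\text{-}\Eq(\C)$ is \emph{regular} Mal'tsev, and in fact exactness fails: an internal equivalence relation $(E,R_E,S_E)$ on $(X,R,S)$ need not be a kernel pair, because the kernel pair of the quotient $(X,R,S)\to (X/E,q(R),q(S))$ carries the \emph{maximal} compatible relations $R\times_{q(R)}R$ and $S\times_{q(S)}S$, whereas $R_E$ and $S_E$ may be strictly smaller. So the upgrade ``pushout of regular epis $\Rightarrow$ regular pushout'' is not available at the level of $2\text{-}\Eq(\C)$, and your Step~3 does not go through as written.

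The ``alternative'' you sketch at the end is indeed the route the paper takes, but it is not quite as automatic as ``three applications of the exact Mal'tsev property in $\C$''. The $X$-component is handled exactly as you say, via $f([R,S])=[R',S']$. For the $R$-component, however, one needs an extra observation: since $[R,S]\leq R$, the induced map $X/R\to (X/[R,S])/q_{[R,S]}(R)$ is an isomorphism, and likewise for $X'$. This makes the ``bottom'' square (at the level of quotients by $R$) a pullback, and the paper then uses commutativity of limits with limits to identify the kernel pair of the comparison arrow $R\to R'\times_{q_{[R',S']}(R')} q_{[R,S]}(R)$ and conclude it is a regular epimorphism. Simply knowing that the three component squares are pushouts of regular epis in $\C$ is not enough; you need this additional leverage from $[R,S]\leq R\wedge S$ to control the $R$- and $S$-levels.
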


\begin{proof} Since pullbacks and regular epimorphisms in $2$-$\Eq(\C)$ are degreewise pullbacks and regular epimorphisms in $\C$, so are regular pushouts. Let us consider the following commutative diagram:
\[\begin{tikzcd}[ampersand replacement=\&] \& \& R \ar[dd,shift left]\ar[dd, shift right] \ar[rr] \ar[dr,"f_R"]\& \& q_{[R,S]}(R) \ar[dd,shift left]\ar[dd, shift right] \ar[dr] \& \\
\& \& \& {R'} \ar[rr,crossing over] \& \& q_{[R',S']}(R') \ar[dd,shift left]\ar[dd, shift right] \\
{[R,S]} \ar[dr,"\tilde{f}"']\ar[rr,shift left]\ar[rr, shift right] \& \& X\ar[rr,"q_{[R,S]}", near end] \ar[dr,"f"'] \ar[dd,"q_R"' near end] \& \& \frac{X}{[R,S]} \ar[dr,"\bar{f}"] \ar[dd]\&  \\
\& {[R',S']} \ar[rr,shift left, crossing over]\ar[rr, shift right,crossing over] \& \& {X'}\ar[rr,"q_{[R',S']}" near start, crossing over] \ar[from=uu,shift left,crossing over]\ar[from=uu, shift right,crossing over]\& \& \frac{X'}{[R',S']} \ar[dd] \\
\& \& \frac{X}{R} \ar[dr] \ar[rr,equal]\& \& \frac{X}{R} \ar[dr]\& \\
\& \& \& \frac{X'}{R'} \ar[from=uu, crossing over,"q_{R'}" near start] \ar[rr,equal] \& \& \frac{X'}{R'}
\end{tikzcd}\]
By the properties of the commutator one has the equalities
\[[R',S']=[f(R),f(S)]=f([R,S]),\]
thus the arrow $\tilde{f}:[R,S]\to [R',S']$ is a regular epimorphism. It follows that the square
\[\begin{tikzcd}[ampersand replacement=\&] X \ar[d,"f"'] \ar[r,"q_{[R,S]}"]\& \frac{X}{[R,S]} \ar[d,"\bar{f}"]\\ X' \ar[r,"q_{[R',S']}"']\& \frac{X'}{[R',S']}\end{tikzcd}\]
is a regular pushout in $\C$. All the sides of the top face are regular epimorphisms; it suffices then to show that the comparison arrow to the pullback is a regular epi. To this end, notice that the bottom face is a pullback, since two opposite sides are isomorphisms. Now by commutativity of limits with limits, the kernel pair of the induced arrow $X'\times_{X'/[R',S']} X/[R,S]\to X/R$ is the pullback $R'\times_{q_{[R',S']}(R')} q_{[R,S]}(R)$. It follows that the comparison arrow is a regular epimorphism, because the induced square of coequalizers
\[\begin{tikzcd}[ampersand replacement=\&] X \ar[r]\ar[d,"q_R"'] \& X'\times_{X'/[R',S']} X/[R,S]\ar[d] \\ \frac{X}{R} \ar[r,equal] \& \frac{X}{R}\end{tikzcd}\]
is a regular pushout.
\end{proof}

Let us consider a class $\E$ of arrows in $2$-$\Eq(\C)$ satisfying the following axioms :
\begin{enumerate}[label=(E\arabic*)]
\item any isomorphism is in $\E$;
\item $\E$ is stable under pullbacks;
\item $\E$ is closed under composition.
\end{enumerate}

In the following we will call \emph{extensions} the arrows in $\E$. 
\begin{remark}
Of course, a natural example of a class of extensions in the regular category $2$-$\Eq(\C)$ - i.e. satisfying the axioms (E1), (E2), and (E3) above - is provided by the class $\E$ of regular epimorphisms. From now on we shall adopt an axiomatic approach to the class $\E$ that will have several advantages, including the possibility of comparing the results in our paper to the ones obtained by Everaert in \cite{E14}.
\end{remark}
Assume then that $H \colon \Conn(\C) \to 2$-$\Eq(\C)$ denotes the forgetful functor, and the class $\E$ of extensions in $2$-$\Eq(\C)$ also satisfies the condition 
\begin{enumerate}[label=(G\arabic*)]
\item $HI(\E) \subset \E$
\end{enumerate}
(where $I:2\text{-}\Eq(\C)\to \Conn(\C)$ denotes the reflector). One then gets a Galois structure $\Gamma_\E=(2$-$\Eq(\C),\Conn(\C),I,\E)$ in the sense of \cite{JK97,E14}.

We recall that an extension $f:(X,R,S) \to (X',R',S')$ is said to be \emph{monadic} when the pullback functor
\[f^* : 2\text{-}\Eq(\C)\downarrow_\E (X',R',S') \to 2\text{-}\Eq(\C)\downarrow_\E (X,R,S)\]
is monadic, where $2$-$\Eq(\C)\downarrow_\E (X,R,S)$ denotes the full subcategory of the slice category $2$-$\Eq(\C)/ (X,R,S)$ whose objects are the extensions.

The hypotheses on the class $\E$ implies that $I$ induces for all $(X,R,S)$ in $2$-$\Eq(\C)$ a functor
\[I^{(X,R,S)}:2\text{-}\Eq(\C)\downarrow_\E (X,R,S)\to \Conn(\C)\downarrow_{\E}I(X,R,S);\]
moreover one can show that this functor has a right-adjoint $H^{(X,R,S)}$ defined by taking the pullback of any extension along the $(X,R,S)$-component of the unit. An object $(X,R,S)$ is said to be \emph{admissible} when this right-adjoint is fully faithful.

We say that an extension is \emph{$\Gamma_\E$-trivial} when the naturality square
\begin{equation}\label{eq:units_square}\begin{tikzcd}[ampersand replacement=\&,column sep=huge] X \ar[d,"f"']\ar[r,"q_{[R,S]}"] \& \frac{X}{[R,S]} \ar[d,"\bar{f}"] \\ X' \ar[r,"q_{[R',S']}"'] \& \frac{X'}{[R',S']} \end{tikzcd}\end{equation}
induced by the units of the adjunction is a pullback. By definition a \emph{$\Gamma_\E$-central extension} $f \colon (X,R,S) \rightarrow (X',R',S')$ is one for which there exists a monadic extension $g \colon (X'',R'',S'') \rightarrow (X',R',S')$ such that the pullback of $f$ along $g$ is $\Gamma_\E$-trivial; one also says that $f$ is split by $g$. Finally, an extension is said to be \emph{$\Gamma_\E$-normal} if it is monadic and it is split by itself; equivalently, one requires that the projections of the kernel pair of $f$ are trivial extensions.

The class $\E$ of extensions can also be seen as the class of objects of a category $\Ext_\E(2$-$\Eq(\C))$, where an arrow between extensions $f\to h$ is defined as a commutative square in $2$-$\Eq(\C)$
\begin{equation}\label{eq:double_ext}\begin{tikzcd}[ampersand replacement=\&] (X,R_1,S_1)\ar[d,"f"'] \ar[r,"g"]\& (Z,R_3,S_3)\ar[d,"h"] \\ (Y,R_2,S_2)\ar[r,"j"'] \& (W,R_4,S_4).\end{tikzcd}\end{equation}
Central extensions form a full subcategory of $\Ext_\E(2$-$\Eq(\C))$, that we shall denote by $\CExt_\E(2$-$\Eq(\C))$. 

We will also study extensions in $\Ext_\E(2$-$\Eq(\C))$, the \emph{double extensions} in the sense \cite{J91,GR04}. These are defined as commutative squares \eqref{eq:double_ext} such that all arrows in the diagram
\[\begin{tikzcd}[ampersand replacement=\&] X\ar[ddr,"f"',bend right]\ar[dr,"{\langle f,g\rangle}"] \ar[drr,"g",bend left] \& \& \\ \& Y\times_W Z \ar[r,"p_2"']\ar[d,"p_1"] \& Z\ar[d,"h"] \\ \& Y\ar[r,"j"'] \& W.\end{tikzcd}\]
are in $\E$. We will denote by $\E^1$ the class of double extensions (considered as a class of arrows in $\Ext_\E(2$-$\Eq(\C))$); it can be proven (see \cite{EGoV12}) that for any class $\E$ satisfying axioms (E1) to (E3), the class $\E^1$ as defined above satisfies the same properties in $\Ext_\E(2$-$\Eq(\C))$. Note that when $\E$ is the class of regular epimorphisms in $\C$, then a double extension in $\C$ is the same thing as a regular pushout.

We now consider two additional axioms for the class $\E$ of extensions:
\begin{enumerate}[label=(E\arabic*)]
\setcounter{enumi}{3}
\item if $f\circ g$ is in $\E$, then $f$ is in $\E$;
\item any square
\[\begin{tikzcd}[ampersand replacement=\&] X\ar[d,"f"'] \ar[r,"g",shift left]\& Z\ar[l,"s", shift left]\ar[d,"h"] \\ Y\ar[r,"j",shift left] \& W\ar[l,"t", shift left].\end{tikzcd}\]
where $f$ and $h$ are in $\E$ and $gs=id_Z$, $jt=id_W$, $hg=jf$ and $fs=th$, the induced arrow $\langle f,g\rangle : X\to Y\times_W Z$ is in $\E$.
\end{enumerate}

Then one can prove (see \cite{EGoV12}) that a square \eqref{eq:double_ext} of extensions is a double extension if and only if in the diagram
\begin{equation}\label{property}
\begin{tikzcd}[ampersand replacement=\&] Eq[g] \ar[r,shift left,"\pi_1"]\ar[r,shift right,"\pi_2"'] \ar[d,"\bar{f}"']\& X \ar[r,"g"] \ar[d,"f"'] \& Z \ar[d,shift right,"h"] \\ Eq[j] \ar[r,shift left,"\pi_1'"]\ar[r,shift right,"\pi_2'"'] \& Y \ar[r,"j"'] \& W,\end{tikzcd}
\end{equation}
where $Eq[g]$ (resp. $Eq[j]$) denotes the kernel pair of $g$ (resp. $j$), the induced arrow $\bar{f}$ is an extension. When $\E$ is the class of regular epimorphisms, this is equivalent to $f(Eq[g])=Eq[j]$.

In \cite{E14} T. Everaert proved that for any category $\C$, if $\E$ is a class of extensions satisfying the properties (E1) to (E5) and such that every extension is monadic, and $\mathcal{X}$ is a strongly $\E$-Birkhoff subcategory, then the Galois structure $(\C,\mathcal{X},I,\E)$ is admissible in the sense of categorical Galois theory (i.e. every object is admissible), every central extension is also normal, $\CExt_\E(\C)$ is a strongly $\E^1$-Birkhoff subcategory in $\Ext_\E(\C)$, and every extension in $\E^1$ is monadic. In particular, if we write $I^1$ for the reflector $\Ext_\E(\C)\to \CExt_\E(\C)$, then $(\Ext_\E(\C),\CExt_\E(\C),I^1,\E^1)$ is also an admissible Galois structure, and we can define \emph{double central extensions} as double extensions that are central for this induced Galois structure; we denote $\CExt_\E^2(\C)=\CExt_{\E^1}(\CExt_\E(\C))$ the category of double central extensions. By applying the result of Everaert once again, we obtain that $\CExt_\E^2(\C)$ is itself a strongly $\E^2$-Birkhoff subcategory of $\Ext_\E^2(\C)$. This means in particular that given a cube
\[\begin{tikzcd}[ampersand replacement=\&]
X\ar[dr,"\alpha"']\ar[dd,"f"']\ar[rr,"g"] \& \& Z \ar[dr,"\gamma"]\ar[dd,"h", near end] \& \\
\& X' \ar[rr,"g'",crossing over,near start]\& \& Z'\ar[dd,"h'"]\\
Y \ar[rr,"j",near end] \ar[dr,"\beta"'] \& \& W \ar[dr,"\delta"] \& \\
\& Y' \ar[from=uu,crossing over,"f'"',near start]\ar[rr,"j'"'] \& \& W',\end{tikzcd}\]
that is an extension in $\Ext_\E^2(\C)$ and if the back face is a double central extension, then the front face is a double central extension as well. Note that being an extension in $\Ext_\E^2(\C)$ - a \emph{triple extension} in $\C$ - means that every face of the cube above is a double extension in $\C$ and, moreover, the induced square
\[\begin{tikzcd}[ampersand replacement=\&] X \ar[r,"g"]\ar[d,"{\langle f,\alpha\rangle}"']\& Z \ar[d,"{\langle h,\gamma \rangle}"]\\ Y\times_{Y'}X' \ar[r]\& W\times_{W'}Z'\end{tikzcd}\]
is a double extension.

All these results apply when $\E$ is the class of regular epimorphisms in $2$-$\Eq(\C)$; indeed, in that case monadic extensions are exactly the same as effective descent morphisms and, moreover, $\Conn(\C)$ is a strongly $\E$-Birkhoff subcategory of $2$-$\Eq(\C)$ by \Cref{prop:Birkhoff}. We will need, however, to restrict our attention to a smaller class of extensions, which we will call \emph{fibrations}; this term was already used in \cite{J04} to denote the classes of arrows occurring in a Galois structure.

\begin{defn}\label{fibration}
A \emph{fibration} $f:(X,R,S) \to (X',R',S')$ is a regular epimorphism  in the category $2$-$\Eq(\C)$ satisfying one of the following equivalent properties :
\begin{itemize}
\item $f^{-1}(R')=R$ and $f^{-1}(S')=S$;
\item $Eq[f]\leq R\wedge S$;
\item the induced arrows $\frac{X}{R}\to \frac{X'}{R'}$ and $\frac{X}{S}\to \frac{X'}{S'}$ are isomorphisms.
\end{itemize}
\end{defn}
From now on $\F$ will denote the class of fibrations in $2$-$\Eq(\C)$. One can check that the class $\F$ satisfies (E1), (E2) and (E3); however, $\F$ does not satisfy (E4), because not every split epimorphism is in $\F$. But the following variant of (E4) holds:

\begin{lemma}Consider the following commutative diagram in $2$-$\Eq(\C)$:
\[\begin{tikzcd}[ampersand replacement=\&] (X,R,S) \ar[rr,"f"]\ar[dr,"h"']\& \& (X',R',S') \ar[dl,"g"] \\ \& (X'',R'',S'').\& 
\end{tikzcd}\]
If $h$ is in $\F$ and $f:X\to X'$ is a regular epimorphism in $\C$, then both $f$ and $g$ are in $\F$.
\end{lemma}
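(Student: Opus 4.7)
The plan is to exploit the third characterization of fibrations in \Cref{fibration}, namely that $f$ is a fibration if and only if $f^{-1}(R')=R$ and $f^{-1}(S')=S$. The key observation is the contravariant composition of preimages, $h^{-1}=f^{-1}\circ g^{-1}$, together with the standard fact that in a regular category $f(f^{-1}(T))=T$ whenever $f$ is a regular epimorphism (applied here to equivalence relations, since $f\times f$ is also regular epi).

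First I would observe that $g$ is a regular epimorphism in $\C$: since $h$ is a fibration it is in particular a regular epimorphism in $2$-$\Eq(\C)$, hence in $\C$, and writing $h=g\circ f$ with $f$ a regular epi forces $g$ to be a regular epi by the usual fact that if a composite is a strong (= regular) epi then so is its last factor. Next I would compute direct images: since $f$ and $g$ are morphisms in $2$-$\Eq(\C)$, we have $f(R)\leq R'$ and $g(R')\leq R''$, and similarly for $S$; combined with the fibration property $h(R)=R''$ this gives $R''=g(f(R))\leq g(R')\leq R''$, whence $g(R')=R''$, and symmetrically $g(S')=S''$. Thus $g$ is already a regular epimorphism in $2$-$\Eq(\C)$.

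Now I would exploit that $h$ is a fibration, i.e. $h^{-1}(R'')=R$; using $h^{-1}=f^{-1}\circ g^{-1}$ this reads $f^{-1}(g^{-1}(R''))=R$. Combined with the obvious chain $R\leq f^{-1}(R')\leq f^{-1}(g^{-1}(R''))$, all three terms must coincide, giving $f^{-1}(R')=R$, and symmetrically $f^{-1}(S')=S$. Applying $f$ to both sides and using $f(f^{-1}(-))=\mathrm{id}$ yields $f(R)=R'$ and $f(S)=S'$, which together with $f$ being regular epi in $\C$ makes $f$ a regular epi in $2$-$\Eq(\C)$; hence $f$ is a fibration. Applying $f$ in the same way to the equality $f^{-1}(g^{-1}(R''))=f^{-1}(R')$ yields $g^{-1}(R'')=R'$, and analogously $g^{-1}(S'')=S'$, so $g$ is a fibration too. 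I do not expect a serious obstacle here: the proof is a careful but routine manipulation of direct and inverse images of equivalence relations, the only mildly delicate point being the implicit use of stability of regular epimorphisms under binary products in order to justify $f(f^{-1}(T))=T$ for an equivalence relation $T$, which is standard in the exact Mal'tsev setting fixed at the start of the section.
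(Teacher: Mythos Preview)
Your proof is correct. You use the first characterization of fibrations in \Cref{fibration} (via inverse images $f^{-1}(R')=R$), whereas the paper uses the third (via the induced maps on quotients $X/R\to X'/R'$ being isomorphisms). In the paper's argument one passes to the triangle of quotients $X/R\to X'/R'\to X''/R''$, observes that $\overline{h}$ is an isomorphism by hypothesis, that $\overline{f}$ is a regular epimorphism (as the second factor of the regular epimorphism $q_{R'}\circ f$) and hence also a monomorphism, so both $\overline{f}$ and $\overline{g}$ are isomorphisms. Your route instead manipulates the chain $R\leq f^{-1}(R')\leq f^{-1}(g^{-1}(R''))=h^{-1}(R'')=R$ directly and then pushes forward along $f$ using $f(f^{-1}(-))=\mathrm{id}$. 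The two arguments are formally dual to one another; the paper's is marginally shorter because ``mono $+$ regular epi $=$ iso'' packages the whole computation in one step, while yours makes the image/preimage bookkeeping explicit (and in particular makes visible the step $f(R)=R'$, which the paper leaves implicit in the passage from ``$\overline{f}$ iso'' back to ``$f$ is a fibration'').
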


\begin{proof}Taking coequalizers of $R$, $R'$ and $R''$ yields a commutative triangle
\[\begin{tikzcd}[ampersand replacement=\&] \frac{X}{R} \ar[rr,"\overline{f}"]\ar[dr,"\overline{h}"']\& \& \frac{X'}{R'} \ar[dl,"\overline{g}"] \\ \& \frac{X''}{R''}, \& 
\end{tikzcd}\]
in $\C$. Now by assumption $\overline{h}$ is an isomorphism, and thus $\overline{f}$ is a monomorphism. Since by construction it is a regular epimorphism, it is an isomorphism, and as a consequence so is $\overline{g}$. The same argument applies to the quotients by $S,S',S''$, hence $f$ and $g$ are in $\F$.
\end{proof}

As a consequence, we have :
\begin{lemma} Every fibration $f$ induces a monadic pullback functor
\[f^*:2\text{-}\Eq(\C) \downarrow_\F (X',R',S') \to 2\text{-}\Eq(\C) \downarrow_\F (X,R,S).\]
\end{lemma}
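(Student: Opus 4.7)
The plan is to verify Beck's monadicity theorem for $f^*$ on $\F$-slices, bootstrapping from the monadicity of $f^*$ on the larger $\E$-slices (where $\E$ is the class of regular epimorphisms in $2\text{-}\Eq(\C)$). Since $f\in\F$ is in particular a regular epimorphism in the exact Mal'tsev category $2\text{-}\Eq(\C)$, it is an effective descent morphism, so $f^*\colon 2\text{-}\Eq(\C)\downarrow_\E(X',R',S')\to 2\text{-}\Eq(\C)\downarrow_\E(X,R,S)$ is monadic.

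First I would verify that $f^*$ takes values in the subcategory of fibrations. Since limits in $2\text{-}\Eq(\C)$ are computed pointwise and the defining condition $g^{-1}(R')=R_Y$ of a fibration is preserved under pullback along any morphism of $2\text{-}\Eq(\C)$, the map $f^*(g)=\pi_1$ is a fibration whenever $g$ is. The left adjoint $f_!$ is post-composition with $f$, which lands in $\F$ by closure under composition (axiom (E3)); the usual slice adjunction formulas carry over unchanged.

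The crucial observation, which uses the preceding lemma, is that for a regular epimorphism $g\colon(Y,R_Y,S_Y)\to(X',R',S')$, one has $g\in\F$ if and only if $f^*(g)\in\F$. For the non-trivial direction, consider the pullback
\[\begin{tikzcd}[ampersand replacement=\&] (X\times_{X'}Y,\ldots) \ar[r,"\pi_2"]\ar[d,"\pi_1"'] \& (Y,R_Y,S_Y) \ar[d,"g"] \\ (X,R,S) \ar[r,"f"'] \& (X',R',S').\end{tikzcd}\]
The composite $g\pi_2=f\pi_1$ lies in $\F$ as a composition of the two fibrations $f$ and $\pi_1=f^*(g)$, while $\pi_2$ is a regular epimorphism in $\C$ (pullback of the regular epimorphism $f$ along $g$). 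Applying the preceding lemma to the triangle $g\pi_2=f\pi_1$ then forces both $\pi_2$ and $g$ into $\F$.

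Granted this equivalence, Beck's conditions at the $\F$-level follow from those at the $\E$-level. Reflection of isomorphisms is immediate since $\F$-slices are full subcategories of $\E$-slices. For a pair in $\F(X')$ whose $f^*$-image admits a split coequalizer in $\F(X)$, $\E$-monadicity produces a coequalizer $g_2\twoheadrightarrow g$ in $\E(X')$ with $f^*(g)$ isomorphic to that split coequalizer, hence in $\F(X)$; the equivalence above then places $g$ in $\F(X')$, and fullness of $\F(X')\hookrightarrow\E(X')$ promotes $g_2\twoheadrightarrow g$ to a coequalizer in $\F(X')$. I expect the main obstacle to be precisely the converse direction of this key equivalence, which the preceding lemma has been set up to deliver.
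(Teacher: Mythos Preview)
Your proof is correct and follows essentially the same strategy as the paper: bootstrap from the known monadicity of $f^*$ on $\E$-slices via Beck's theorem, using the preceding lemma to handle the passage to $\F$. The only cosmetic difference is that the paper applies the lemma directly to show that each $\F$-slice is closed under coequalizers in the corresponding $\E$-slice (via the triangle formed by the coequalizer map and the two structure maps over the base), whereas you extract the intermediate reflection property $g\in\F\Leftrightarrow f^*(g)\in\F$ and feed that into Beck; both routes are short and equivalent.
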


\begin{proof}We know already that the pullback functor
\[f^*:2\text{-}\Eq(\C) \downarrow_\E (X',R',S') \to 2\text{-}\Eq(\C) \downarrow_\E (X,R,S)\]
has a left adjoint $f_!$ defined by composition with $f$, and that it is monadic. The class $\F$ is stable under pullbacks and composition: accordingly, the functor $f^*$ and its adjoint $f_!$ restrict to an adjunction between $2$-$\Eq(\C) \downarrow_\F(X',R',S')$ and $2$-$\Eq(\C) \downarrow_\F (X,R,S)$. We need to prove that this restriction is still monadic: using Beck's monadicity theorem \cite{ML}, we only need to prove that $2$-$\Eq(\C) \downarrow_\F (X,R,S)$ is closed in $2$-$\Eq(\C) \downarrow_\E (X,R,S)$ under coequalizers. Since the forgetful functor $2$-$\Eq(\C) \downarrow (X,R,S)\to 2$-$\Eq(\C)$ creates coequalizers, this follows from the previous lemma.
\end{proof}

Observe that each $(X,R,S)$-component of the unit of the reflection $2$-$\Eq(\C)\to \Conn(\C)$ actually belongs to $\F$, and we have shown in the proof of \Cref{prop:Birkhoff} that so is the induced map $$X\to X'\times_{X'/[R',S']} X/[R,S]$$ whenever $f:(X,R,S)\to (X',R',S')$ is a regular epimorphism. Thus $\Conn(\C)$ is also a strongly $\F$-Birkhoff subcategory of $2$-$\Eq(\C)$, and the reflector $I$ preserves arrows in $\F$. Proposition $2$ in \cite{E14} then implies that the Galois structure $\Gamma_\F$ is admissible.

Moreover, an extension is $\Gamma_\F$-trivial if and only if it is $\Gamma_\E$-trivial and lies in $\F$; indeed, both conditions are equivalent to the square \eqref{eq:units_square} being a pullback. Since in both classes any extension is monadic, the same holds for central and normal extensions. Moreover, the centralisation of an extension in $\F$ is again in $\F$, thus the same reasoning works for double central extensions. It follows that double central extensions for $\F$ are exactly the double central extensions for the class of regular epimorphisms which also belong to $\F^1$.

\section{The characterization theorem for central extensions}

As recalled in the Introduction, when the category $\C$ is a Mal'tsev variety of universal algebras, T. Everaert and the second author proved in \cite{EG06} that a surjective homomorphism \eqref{extensiongraph}
 in the category $\RG(\C)/B$ of reflexive graphs in a Mal'tsev variety $\C$ (with fixed ``algebra of objects'' $B$) is central with respect to the reflection to the subcategory $\Grpd(\C)/B$ of groupoids in $\C$ if and only if the \emph{commutator condition}
\begin{equation}\label{cc} [Eq[f],Eq[c]\vee Eq[d]]=\Delta_{X_1} \end{equation}
holds in $\C$. 
We now prove a more general result in any exact Mal'tsev category (with coequalizers) by characterizing the central extensions corresponding to the reflection $2$-$\Eq(\C) \rightarrow  \Conn(\C)$. For this, we first prove the result for the Galois structure $\Gamma_\E=(2\text{-}\Eq(\C),\Conn(\C),I,\E)$, where $\E$ is the class of regular epimorphisms in $2\text{-}\Eq(\C)$.

\begin{lemma}\label{non-trivial}
Let $\C$ be an exact Mal'tsev category with coequalizers, and $f:(X,R,S)\to (X',R',S')$ a regular epimorphism in $2$-$\Eq(\C)$
\[\begin{tikzcd}[ampersand replacement=\&]R \ar[r,shift left]\ar[r,shift right] \ar[d,"f_R"'] \& X \ar[d,"f"] \& \ar[l,shift left]\ar[l,shift right] S \ar[d,"f_S"]\\
R' \ar[r,shift left]\ar[r,shift right] \& X' \& \ar[l,shift left]\ar[l,shift right] S' .\end{tikzcd}\]
If $[Eq[f],R\vee S]=\Delta_X $, then $f$ is a $\Gamma_\E$-normal extension.
\end{lemma}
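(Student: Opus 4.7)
The plan is to show that $f$ is split by itself, i.e., that its kernel-pair projections $\pi_1,\pi_2 \colon (Eq[f],R_1,S_1) \to (X,R,S)$ in $2\text{-}\Eq(\C)$ are $\Gamma_\E$-trivial extensions; combined with monadicity of $f$ (automatic for regular epimorphisms in the exact Mal'tsev category $2\text{-}\Eq(\C)$), this yields $\Gamma_\E$-normality. By symmetry it suffices to treat $\pi_1$. Since the unit square at $\pi_1$ is already a regular pushout in $2\text{-}\Eq(\C)$ by \Cref{prop:Birkhoff}, the canonical comparison map into the pullback is a regular epimorphism, and the square is a pullback precisely when this comparison is a monomorphism; at the level of the underlying object in $\C$, this translates to the commutator identity
\[ Eq[\pi_1] \wedge [R_1,S_1] = \Delta_{Eq[f]}. \]

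First I establish, via commutator calculus driven by the hypothesis, that $R_1 \vee S_1$ centralizes $Eq[\pi_1]$. By monotonicity (property (3)), $[Eq[f],R] = [Eq[f],S] = \Delta_X$. Applying property (5) of the commutator to the regular epimorphisms $\pi_i \colon Eq[f] \to X$ (which satisfy $\pi_i(R_1) = R$, $\pi_1(Eq[\pi_1]) = \Delta_X$, and $\pi_2(Eq[\pi_1]) = Eq[f]$), one computes
\[ \pi_1([R_1,Eq[\pi_1]]) = [R,\Delta_X] = \Delta_X \quad\text{and}\quad \pi_2([R_1,Eq[\pi_1]]) = [R,Eq[f]] = \Delta_X, \]
so $[R_1, Eq[\pi_1]] \leq Eq[\pi_1] \wedge Eq[\pi_2] = \Delta_{Eq[f]}$. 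An analogous calculation yields $[S_1, Eq[\pi_1]] = \Delta_{Eq[f]}$, and distributivity (property (4)) then gives $[R_1 \vee S_1, Eq[\pi_1]] = \Delta_{Eq[f]}$.

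To upgrade this centralization into the required meet identity, I aim to establish the sharper bound $[R_1,S_1] \leq \delta_*([R,S])$, where $\delta \colon X \to Eq[f]$ is the diagonal (a common section of $\pi_1$ and $\pi_2$) and $\delta_*([R,S]) := (\delta\times\delta)([R,S])$ is the image of $[R,S]$ on the diagonal of $Eq[f]$. Once established, any element of $\delta_*([R,S]) \wedge Eq[\pi_1]$ is of the form $((x,x),(x',x'))$ with $x = x'$, forcing it to lie in $\Delta_{Eq[f]}$. Under the hypothesis, $R$ and $Eq[f]$ centralize on $X$, so $R_1$ coincides with the centralizing double equivalence relation $R \square Eq[f] \cong R \times_X Eq[f]$ (and similarly for $S_1$); this realizes $R_1$ as a join of the diagonal image $\delta_*(R)$ with a central part contained in $Eq[\pi_2]$. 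A direct computation of $[R_1,S_1]$ via the Smith-type presentation of Pedicchio's commutator then collapses the cross-terms involving the central parts, thanks to the centralizations established above, leaving only $[\delta_*(R),\delta_*(S)] = \delta_*([R,S])$.

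The main obstacle is carrying out this decomposition-and-cancellation step rigorously in a general exact Mal'tsev category: while in the group (or semi-abelian) setting it reduces to a concrete commutator computation on normal subgroups written in semi-direct-product form, in the abstract framework one must express these manipulations entirely through the Mal'tsev connector associated to the centralizing pair $(Eq[f], R \vee S)$ and through Pedicchio's characterization of the commutator as the smallest congruence whose quotient makes the images of $R_1$ and $S_1$ centralize.
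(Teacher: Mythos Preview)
Your reduction to showing $Eq[\pi_1]\wedge[R_1,S_1]=\Delta_{Eq[f]}$ is correct, and your commutator computation establishing $[R_1\vee S_1,Eq[\pi_1]]=\Delta_{Eq[f]}$ is clean and valid. However, the proof has a genuine gap at the decisive step: the claim $[R_1,S_1]\leq\delta_*([R,S])$ is asserted but not proved, and you yourself flag the ``decomposition-and-cancellation'' argument as an unresolved obstacle in the abstract setting. The join decomposition of $R_1$ you allude to is not made precise, and there is no general commutator identity that lets you pass from $[R_1\vee S_1,Eq[\pi_1]]=\Delta$ to control over $[R_1,S_1]\wedge Eq[\pi_1]$; these are logically independent conditions. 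So as it stands the argument is incomplete, with the hard part postponed rather than solved.

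The paper's proof avoids computing $[R_1,S_1]$ altogether. Instead of analysing the unit square of $\pi_1$ directly, it exhibits $\pi_1$ as a pullback (in $2\text{-}\Eq(\C)$) of an arrow that already lies in $\Conn(\C)$, hence is automatically trivial. Concretely, one passes to the quotient $X\to X/(R\vee S)$, uses the hypothesis to produce centralizing relations $C$ on $Eq[f]$ and $(R\vee S)$ and $C'$ on their images in $X/R$, and forms the coequalizer $Eq[f]\to Eq[f]/C$. The Barr--Kock theorem then yields the required pullback, and one finishes by observing that $Eq[f]/C\rightrightarrows X/(R\vee S)$ is a quotient of an internal equivalence relation, hence an internal groupoid, so the induced relations $Eq[\rho]$ and $Eq[\sigma]$ centralize. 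This bypasses entirely the delicate commutator estimate you were aiming for.
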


\begin{proof}
From the definitions above, and because kernel pairs in $2$-$\Eq(\C)$ are computed levelwise and are maximal double equivalence relations, we need to show that in the diagram
\[\begin{tikzcd}[ampersand replacement=\&] R\square Eq[f] \ar[d,shift right]\ar[d, shift left] \ar[r,shift left,"\phi_1"]\ar[r,shift right,"\phi_2"'] \& R \ar[r,"f_R"] \ar[d,shift right]\ar[d, shift left] \& R' \ar[d,shift right]\ar[d, shift left] \\
Eq[f] \ar[r,shift left,"\pi_1"]\ar[r,shift right,"\pi_2"'] \& X \ar[r,"f"]  \& X' \\
S\square Eq[f] \ar[u,shift right]\ar[u, shift left]\ar[r,shift left,"\phi_1'"]\ar[r,shift right,"\phi_2'"'] \& S \ar[u,shift right]\ar[u, shift left] \ar[r,"f_S"'] \& S' \ar[u,shift right]\ar[u, shift left] \end{tikzcd}\]
the extension
$$\pi_1 : (Eq[f],R\square Eq[f],S\square Eq[f]) \to (X,R,S)$$
(or, equivalently, $\pi_2$) is a $\Gamma_\E$-trivial extension in $2$-$\Eq(\C)$ (with respect to the reflection to $\Conn(\C)$).

To this end, we first consider the pushout
\begin{equation}\begin{tikzcd}[ampersand replacement=\&] X \ar[d,"q_R"'] \ar[r,"q_S"]\& \frac{X}{S} \ar[d,"q'"]\\ \frac{X}{R} \ar[r,"q"']\& \frac{X}{R \vee S},\end{tikzcd}\label{eq:pushout}\end{equation}
so that $q'q_S = q_{R\vee S} = qq_R$ (this pushout exists since $\C$ is an exact Mal'tsev category). Then we have
\[Eq[q'q_S]=R\vee S=Eq[qq_R]\]
and thus
\[q_S(R)=Eq[q']\qquad q_R(S)=Eq[q].\]

As a consequence, if the condition $[Eq[f],S]=\Delta_{X}$ holds, then taking the direct image of both sides of this equality by $q_R$ yields
\[ [q_R(Eq[f]),Eq[q]] = \Delta_{\frac{X}{R}}.\]
A similar argument shows that
\[[q_S (Eq[f]),Eq[q']]=\Delta_{\frac{X}{S}}.\]
This observation allows us to consider the diagram

\begin{equation}\begin{tikzcd}[ampersand replacement=\&]
C \ar[dr,shift left]\ar[dr,shift right] \ar[dd] \ar[rr, shift right]\ar[rr,shift left]\& \& R \vee S \ar[dd] \ar[dr,shift left] \ar[dr, shift right] \& \& \& \\
\& {Eq[f]} \ar[rr,"\pi_1", near start,shift left,crossing over]\ar[rr,"\pi_2"',near start, shift right,crossing over] \& \& X\ar[dd,"q_R"] \ar[dr,"q_{R\vee S}"] \ar[rr,"f", near end] \& \& X' \ar[dd,"q_{R'}"] \\
C' \ar[dr,shift left]\ar[dr,shift right]\ar[rr,shift right]\ar[rr, shift left]\& \& {q_R(S)} \ar[dr, shift left]\ar[dr, shift right]\& \& \frac{X}{R\vee S} \&
\\ \& {q_R(Eq[f])} \ar[from=uu,crossing over]\ar[rr,"\psi_1'",shift left]\ar[rr,"\psi_2'"', shift right]\& \& \frac{X}{R} \ar[dr,"q"]\ar[rr,"f_0'"', near end]\& \& \frac{X'}{R'}
\\ \& \& \& \& \frac{X}{R\vee S} \ar[ equal,crossing over,from=uu] \&
\end{tikzcd}\end{equation}
where $C$ and $C'$ are centralizing relations (so that the upper and lower commutative horizontal squares are pullbacks), and the arrow between them is the canonical arrow making the diagram commute. By taking the coequalizers $p$, $p'$ of the equivalence relations $\begin{tikzcd}C \ar[r,shift left] \ar[r,shift right] & Eq[f] \end{tikzcd}$ and $\begin{tikzcd}C' \ar[r,shift left] \ar[r,shift right] & q_R(Eq[f]) \end{tikzcd}$ respectively, we obtain the diagram

$$ \begin{tikzcd}[ampersand replacement=\&,column sep=large]
R\square Eq[f] \ar[dd,shift right] \ar[dd,shift left] \ar[rr,"\phi_1",shift left,near end]\ar[rr,"\phi_2"', shift right,near end] \ar[dr]\& \& R \ar[dd,shift right] \ar[dd,shift left]\ar[rr,"f_R"', near end] \ar[dr]\& \& R' \ar[dd,shift right] \ar[dd,shift left] \\
\& Eq[\rho] \ar[rr, shift left,crossing over, "\widetilde{\phi}_1" near start]\ar[rr,shift right,"\widetilde{\phi}_2"' near start,crossing over] \& \& \frac{X}{R\vee S} \& \\
Eq[f] \ar[dr,"p"'] \ar[rr,"\pi_1",shift left,near start]\ar[rr,"\pi_2"' near start, shift right]\ar[dd]\& \& X\ar[dr,"q_{R\vee S}"]\ar[dd,"q_R", near start] \ar[rr,"f"] \& \& X' \ar[dd,"q_{R'}"] \\
\& \frac{Eq[f]}{C} \ar[from=uu,crossing over,shift right] \ar[from=uu,crossing over,shift left] \ar[rr,shift left,"\widetilde{\pi}_1",near start,crossing over]\ar[rr, shift right,"\widetilde{\pi}_2"',near start,crossing over]\& \& \frac{X}{R\vee S} \ar[from=uu,crossing over,shift right] \ar[from=uu,crossing over,shift left]\& \\
q_R(Eq[f]) \ar[rr,"\psi_1",shift left,near end]\ar[rr,"\psi_2"', shift right,near end] \ar[dr,"p'"']\& \& \frac{X}{R} \ar[rr] \ar[dr,"q"]\& \& \frac{X'}{R'} \\ \& \frac{q_R(Eq[f])}{C'} \ar[from=uu,crossing over,"\rho"' near start] \ar[rr, shift left, "\widetilde{\psi}_1"]\ar[rr,shift right,"\widetilde{\psi}_2"'] \& \& \frac{X}{R\vee S}\ar[from=uu, equal,crossing over] \&  
\end{tikzcd}\label{eq:coegalisateurs}$$
where all the arrows in the front face of the lower cube are induced by the universal property of the coequalizers, so that the whole diagram commutes.

Now, by the so-called Barr-Kock theorem (see \cite{BG04}, for instance), the upper and the lower faces of the lower cube are pullbacks. If we then take the kernel pairs of all the vertical maps, we obtain a pullback of equivalence relations. Proceeding similarly with the roles of $R$ and $S$ reversed, we obtain a pullback
\[\begin{tikzcd}[ampersand replacement=\&] {(Eq[f],R\square Eq[f],S\square Eq[f])} \ar[r,"\pi_1"] \ar[d,"p"']\& {(X,R,S)} \ar[d,"q_{R\vee S}"] \\ {\left(\frac{Eq[f]}{C},Eq[\rho],Eq[\sigma]\right)} \ar[r,"\widetilde{\pi}_1"'] \& {\left(\frac{X}{R\vee S},\Delta,\Delta\right)} 
\end{tikzcd}\]
in $2$-$\Eq(\C)$; since trivial extensions are pullback stable when the subcategory is admissible (\cite{JK97}), to complete the proof all we need to show is that $\widetilde{\pi}_1$ is a $\Gamma_\E$-trivial extension.

This will follow immediately if we show that this extension actually lies in $\Conn(\C)$. Since the smallest equivalence relation $\Delta$ always centralizes itself, it suffices to prove that $Eq[\rho]$ and $Eq[\sigma]$ centralize each other. For this, observe that $\begin{tikzcd}[ampersand replacement=\&]\frac{Eq[f]}{C}\ar[r,shift left,"\tilde{\pi}_1"]\ar[r,shift right,"\tilde{\pi}_2"'] \& \frac{X}{R \vee S} \end{tikzcd}$ is an internal groupoid in $\C$,
as a regular quotient of the equivalence relation $\begin{tikzcd}[ampersand replacement=\&]Eq[f] \ar[r,shift left,"{\pi}_1"]\ar[r,shift right,"{\pi}_2"'] \& X \end{tikzcd}$, so that $[Eq[\widetilde{\pi}_1],Eq[\widetilde{\pi}_2]]=\Delta_{\frac{Eq[f]}{C}}$. Since $\widetilde{\pi}_1=\widetilde{\psi}_1\circ \rho $, we have $Eq[\rho]\leq Eq[\widetilde{\pi}_1]$. A similar argument shows that $Eq[\sigma] \leq Eq[\widetilde{\pi}_2]$, where $\sigma \colon \frac{Eq[f]}{C} \rightarrow \frac{q_S(Eq[f])}{C''}$ and $C''$ is the object part of the centralizing equivalence relation on $q_S(Eq[f])$ and $q_S(R)$. It follows that
\[[Eq[\rho], Eq[\sigma]]\leq [Eq[\widetilde{\pi}_1],Eq[\widetilde{\pi}_2]]=\Delta_{\frac{Eq[f]}{C} },\]
which completes the proof.
\end{proof}

Now, for the Galois structure $\Gamma_\F$, we find :

\begin{thm}\label{Main2} Let $\mathcal C$ be an exact Mal'tsev category with coequalizers, and $f:(X,R,S)\to (X',R',S')$ a fibration in $2$-$\Eq(\C)$.
Then the following conditions are equivalent :
\begin{thmlist}
\item $f$ is a $\Gamma_\F$-central extension;
\item $f$ is a $\Gamma_\F$-normal extension;
\item the commutator condition \eqref{cc} holds: $$[Eq[f],R\vee S]=\Delta_{X}.$$
\end{thmlist}
\end{thm}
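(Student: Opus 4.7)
The equivalence of the three conditions is obtained by combining \Cref{non-trivial} with two additional ingredients: a formal argument that normal extensions are central, and Everaert's general theorem (cited in the preliminaries) identifying central and normal extensions for the Galois structure $\Gamma_\E$.

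For $(3) \Rightarrow (2)$, I would apply \Cref{non-trivial} to conclude that the kernel-pair projection $\pi_1 \colon (Eq[f], R \square Eq[f], S \square Eq[f]) \to (X, R, S)$ is $\Gamma_\E$-trivial. A direct computation with generalised elements, using the fibration hypothesis $Eq[f] \le R \wedge S$ together with the transitivity of $R$ and $S$ across the $Eq[f]$-relation, shows that $R \square Eq[f] = \pi_1^{-1}(R)$ and $S \square Eq[f] = \pi_1^{-1}(S)$ as equivalence relations on $Eq[f]$; hence $\pi_1$ is itself a fibration, and by the equivalence ``$\Gamma_\F$-trivial $\Leftrightarrow$ $\Gamma_\E$-trivial and in $\F$'' recalled just before the statement, $\pi_1$ is $\Gamma_\F$-trivial and $f$ is $\Gamma_\F$-normal.

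The direction $(2) \Rightarrow (1)$ is formal: the kernel-pair projections of $f$ are monadic (being fibrations) and split $f$ over itself. For $(1) \Rightarrow (2)$, I would apply Everaert's theorem to the Galois structure $\Gamma_\E$: the class $\E$ of regular epimorphisms in $2\text{-}\Eq(\C)$ satisfies (E1)--(E5); every regular epi is monadic by the exact Mal'tsev hypothesis; and $\Conn(\C)$ is strongly $\E$-Birkhoff by \Cref{prop:Birkhoff}. The theorem then gives $\Gamma_\E$-central $=$ $\Gamma_\E$-normal, which transfers to $\Gamma_\F$ by the same equivalence used above.

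The main obstacle is $(2) \Rightarrow (3)$, a converse of \Cref{non-trivial}. Assuming $f$ is $\Gamma_\F$-normal, $\pi_1$ is $\Gamma_\F$-trivial, so the naturality square for the unit of the reflection into $\Conn(\C)$ is a pullback pointwise in $\C$. The plan is to reverse the construction in the proof of \Cref{non-trivial}: the pushout of $q_R$ and $q_S$ used there, together with the pullback information extracted from triviality, should produce a centralizing double equivalence relation between $Eq[f]$ and $R \vee S$ in $X$, which is precisely the condition $[Eq[f], R \vee S] = \Delta_X$. Concretely, I would analyse the pullback at the $R$-level (and symmetrically at the $S$-level): via the kernel-pair characterisation of pullbacks of regular epimorphisms, combined with property~(5) of the commutator applied to $(\pi_1)_R$ and the fibration identification $R \square Eq[f] = \pi_1^{-1}(R)$, this should yield $[Eq[f], R] = \Delta_X$ (respectively $[Eq[f], S] = \Delta_X$), from which property~(4) gives $[Eq[f], R \vee S] = [Eq[f], R] \vee [Eq[f], S] = \Delta_X$ as required.
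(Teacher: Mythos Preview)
Your treatment of $(1)\Leftrightarrow(2)$ and $(3)\Rightarrow(2)$ matches the paper's argument (the paper simply quotes the observation, made at the end of the preliminaries, that a fibration is $\Gamma_\E$-normal iff $\Gamma_\F$-normal, while you unpack why $\pi_1$ is itself a fibration; both are fine).

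The gap is in $(2)\Rightarrow(3)$. Your destination is correct --- obtain $[Eq[f],R]=\Delta_X$ and $[Eq[f],S]=\Delta_X$ separately and then invoke property~(4) --- but the route you sketch does not get there. ``Reversing the construction of \Cref{non-trivial}'' is not what happens, and pushing the pullback data forward along $\pi_1$ (or $(\pi_1)_R$) loses exactly the information you need: $\pi_1(Eq[\pi_1])=\Delta_X$, so property~(5) applied to $\pi_1$ yields nothing. The pullback coming from triviality of $\pi_1$ only gives
\[
Eq[\pi_1]\wedge[\pi_1^{-1}(R),\pi_1^{-1}(S)]=\Delta_{Eq[f]},
\]
and there is no general way to extract $[Eq[f],R]$ from a \emph{meet} by a direct-image argument alone.

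The paper's trick is to first manufacture a commutator on $Eq[f]$ that lies under \emph{both} factors of that meet, and only then push forward --- along $\pi_2$, not $\pi_1$. Concretely: since $Eq[\pi_1]\le\pi_1^{-1}(S)$ trivially, and the fibration hypothesis gives $\pi_1^{-1}(S)=\pi_2^{-1}(S)$ and $\pi_1^{-1}(R)=\pi_2^{-1}(R)$, monotonicity yields
\[
[Eq[\pi_1],\pi_2^{-1}(R)]\ \le\ [\pi_2^{-1}(S),\pi_2^{-1}(R)]\quad\text{and}\quad [Eq[\pi_1],\pi_2^{-1}(R)]\ \le\ Eq[\pi_1],
\]
so $[Eq[\pi_1],\pi_2^{-1}(R)]=\Delta_{Eq[f]}$. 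Now property~(5) applied to $\pi_2$ gives $[\pi_2(Eq[\pi_1]),\pi_2(\pi_2^{-1}(R))]=[Eq[f],R]=\Delta_X$, because $\pi_2(Eq[\pi_1])=Eq[f]$. The fibration identity you wrote down is indeed the crucial ingredient, but it is used to swap $\pi_1^{-1}(S)$ for $\pi_2^{-1}(S)$ inside this bounding step, not to set up a direct push-forward of the pullback condition.
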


\begin{proof}
Any extension which is $\Gamma_\F$-central is also $\Gamma_\E$-central. It is then also $\Gamma_\E$-normal (by Lemma $6$ in \cite{E14}), and thus $\Gamma_\F$-normal. This proves $(1)\Rightarrow (2)$; the converse holds by definition.

For $(2) \Rightarrow (3)$, we recall that if $f$ is normal, then the first projection $\pi_1$ of its kernel pair is trivial. The equivalence relation induced on $Eq[f]$ by $R$ and $R'$ is equal to $Eq[f]\square R$, or equivalently $\pi_1^{-1}(R)\wedge \pi_2^{-1}(R)$; and since $R=f^{-1}(R')$ because $f\in\F$, we have
\[\pi_1^{-1}(R)= \pi_1^{-1}(f^{-1}(R'))=(f\pi_1)^{-1}(R')=(f\pi_2)^{-1}(R')= \pi_2^{-1}(R).\]
In a similar way, one can show that the equivalence relation on $Eq[f]$ induced by $S$ and $S'$ is $\pi_1^{-1}(S)=\pi_2^{-1}(S)$, and thus $\pi_1$ being trivial means that the square
\[\begin{tikzcd}[ampersand replacement=\&,column sep=huge] Eq[f]\ar[d,"\pi_1"']\ar[r,"q_{[\pi_2^{-1}(R),\pi_2^{-1}(S)]}"] \& \frac{Eq[f]}{[\pi_2^{-1}(R),\pi_2^{-1}(S)]} \ar[d,"\overline{\pi_1}"] \\ X \ar[r,"q_{[R,S]}"'] \& \frac{X}{[R,S]} \end{tikzcd}\]
is a pullback; in particular, this implies that
\[Eq[\pi_1]\wedge [\pi_2^{-1}(R),\pi_2^{-1}(S)]=\Delta_{Eq[f]}.\]
By the properties of the Pedicchio commutator, we then have
\begin{eqnarray*} 
[Eq[\pi_1],\pi_2^{-1}(R)] & \leq & [\pi_1^{-1}(S),\pi_2^{-1}(R)]  \\
& = &[\pi_2^{-1}(S),\pi_2^{-1}(R)] \\
& = &[\pi_2^{-1}(R),\pi_2^{-1}(S)] 
\end{eqnarray*}
and
\[[Eq[\pi_1],\pi_2^{-1}(R)]\leq Eq[\pi_1],\]
hence $[Eq[\pi_1],\pi_2^{-1}(R)]=\Delta_{Eq[f]}$; taking the direct images by $\pi_2$ yields $$[Eq[f],R]=\Delta_{X}.$$ A similar argument shows that $[Eq[f],S]=\Delta_{X}$, and then
\[[Eq[f],R\vee S]=[Eq[f],R]\vee [Eq[f],S]=\Delta_X.\]
The result then follows from \cref{non-trivial}, and the fact that a fibration is $\Gamma_\E$-normal if and only if it is $\Gamma_\F$-normal.
\end{proof}

\begin{remark} For a regular epimorphism which does not lie in $\F$, it is not necessarily true that any $\Gamma_\E$-central extension must satisfy the commutator condition \eqref{cc} in Theorem \ref{Main2}. For example, for any object $X$ in $\C$, we have that $(X,\Delta_X,\nabla_X)$ is in $\Conn(\C)$, since
\[[\Delta_X,\nabla_X]\leq \Delta_X\wedge \nabla_X=\Delta_X.\]
Thus any regular epimorphism $f:X\to X'$ gives a $\Gamma_\E$-trivial (hence $\Gamma_\E$-central) extension in $2$-$\Eq(\C)$, but in general the condition $[Eq[f],\nabla_X]=\Delta_X$ is not satisfied.
\end{remark}

\begin{example}[Abelian objects]\label{ex:abelian}The characterization of central extensions with respect to the category of abelian objects, which was proved by G. Janelidze and G.M. Kelly for the case of Mal'tsev varieties \cite{JK00} can be seen as a special case of \Cref{Main2}.

Indeed, for every object $X$ of $\C$, $(X,\nabla_X,\nabla_X)$ is an object of $2$-$\Eq(\C)$; via this identification we can see $\C$ as a subcategory of $2$-$\Eq(\C)$, which is full since $f(\nabla_X)\leq \nabla_{X'}$ for any arrow $f:X\to X'$ in $\C$. It is also easily seen that it is closed under limits and regular quotients. Moreover $(X, \nabla_X,\nabla_X)$ is in $\Conn(\C)$ if and only if $[\nabla_X,\nabla_X]=\Delta_X$ if and only if $X$ is an abelian object of $\C$; and the quotient $X\to \frac{X}{[X,X]}$ serves as $X$-reflection for both cases. Since the condition $Eq[f]\leq \nabla_X$ is trivially satisfied for any regular epimorphism $f$ of $\C$, an extension $f:X\to X'$ is central in $\C$ with respect to $\Ab(\C)$ if and only if $f:(X,\nabla_X,\nabla_X)\to (X',\nabla_{X'},\nabla_{X'})$ is a $\Gamma_\F$-central extension of $2$-$\Eq(\C)$ with respect to $\Conn(\C)$. Thus in this case, \Cref{Main2} shows that an extension is central if and only if $[Eq[f],\nabla_X]=\Delta_X$.
\end{example}

\begin{example}[Spans and pregroupoids]\label{ex:spans}
Since the category $\C$ is assumed to be exact, the data of two equivalence relations $R,S$ on $X$ is equivalent to the data of two regular epimorphisms with domain $X$. In other words our category $2$-$\Eq(\C)$ is equivalent to the category $\E\text{-}\Span(\C)$ of spans 
\begin{equation}\label{span} \begin{tikzcd}[ampersand replacement=\&] X_0 \& \ar[l,"\alpha"'] X \ar[r,"\beta"]\&  X_0'\end{tikzcd} \end{equation}
whose morphisms $\alpha$ and $\beta$ are regular epimorphisms in $\C$. Through this equivalence the subcategory $\Conn(\C)$ corresponds to the subcategory whose objects are spans \eqref{span}
 where $[Eq[\alpha],Eq[\beta]]=\Delta_X$ or, equivalently, where there exists a partial Mal'tsev operation $p:X\times_{X_0} X\times_{X_0'} X\to X$.

Such a structure on a span can be defined in any category with finite limits. When it also satisfies the associativity axiom $p(p(x,y,z),u,v)=p(x,y,p(z,u,v))$ (which, as we mentioned before, automatically holds in a Mal'tsev category), it forms what has been called a \emph{pregroupoid} by Kock \cite{K87,K89} and later a \emph{herdoid} by Johnstone \cite{Jo91}. As noted in Definition \ref{fibration}, the condition that an arrow $f:\X\to \Y$ belongs to $\F$ is equivalent to the fact that the induced arrows between the codomains are isomorphisms, so morphisms in $\F$, seen as morphisms in $\E\text{-}\Span(\C)$, must be of the form
\[\begin{tikzcd}[ampersand replacement=\&]\& \ar[dl,"\alpha"'] X \ar[dr,"\beta"]\ar[dd,"f"]\& \\  X_0 \& \&  X_0' \\ \& \ar[ul,"\alpha'"] Y \ar[ur,"\beta'"'] \& \end{tikzcd}\]
with $f$ a regular epi in $\C$. Then our Theorem \ref{Main2} says that an extension of this form in $\E\text{-}\Span(\C)$ is central with respect to the subcategory of pregroupoids if and only if $[Eq[f],Eq[\alpha]\vee Eq[\beta]]=\Delta_X$.

As a special case, one can then consider spans where the two structural morphisms are split epimorphisms, with a given common section; thus we have $\X=(X_1,X_0,c,d,i)$ where $di = id_{X_0}= ci$ in $\C$. These are the reflexive graphs in $\C$, and a pregroupoid structure on a reflexive graph is in fact equivalent to an internal groupoid structure.

We can then consider the category $\RG(\C)/X_0$ of reflexive graphs over a fixed ``object of objects'' $X_0$, where \emph{all} arrows are of the form
\[\begin{tikzcd}[ampersand replacement=\&] X_1 \ar[rr,"f"] \ar[dr,shift left,"d"] \ar[dr,shift right,"c"'] \& \& Y_1 \ar[dl,shift left,"d'"] \ar[dl,shift right,"c'"'] \\  \& X_0 \&\end{tikzcd}\]
and also satisfy $fi=i'$. This category can be seen as a (non full) subcategory of $\E\text{-}\Span(\C)$, and it can be shown that it is closed under pullbacks. Moreover, the condition that $X_0$ is fixed implies that $Eq[f]\leq  Eq(c) \wedge Eq(d)$, so that regular epimorphisms in $\RG(\C)/X_0$ all lie in the class $\F$ of fibrations in $2$-$\Eq(\C)$. We conclude that an extension is central in $\RG(\C)/X_0$ if and only if it is $\Gamma_\F$-central when seen in $2$-$\Eq(\C)$. Thus, by \Cref{Main2}, an extension is central in $\RG(\C)/X_0$ if and only if
\[[Eq[f],Eq[c]\vee Eq[d]]=\Delta_{X_1}.\]
As mentioned above, this characterization was proved by T. Everaert and the second author in \cite{EG06} in the special case of Mal'tsev varieties, although the proof involved long calculations with elements, which could not be carried in abstract categories.
\end{example}

\begin{remark}
When the category $\C$ is pointed, so that there is an object $0$ in $\C$ that is both initial and terminal, then every object $X$ has a unique structure of reflexive graph above $0$, so that $\RG(\C)/0\simeq \C$. Since the kernel pair of the unique arrow $X\to 0$ is the largest equivalence relation $\nabla_X$ on $X$, such a reflexive graph is an internal groupoid if and only if $X$ is an abelian object, so that $\Grpd(\C)\simeq \Ab(\C)$; and in this context this is equivalent to the fact that $X$ is an abelian group object. In such a category, we can then obtain the characterisation of central extensions in $\C$ with respect to $\Ab(\C)$ as the special case of the characterisation of central extensions in $\RG(\C)/X_0$ with respect to $\Grpd(\C)/X_0$, where $X_0=0$.

This isn't true, however, when the category is not pointed, since in this case defining an internal groupoid above the terminal object $1$ requires the choice of an ``identity'' $1\to X$, which may not always be possible (for example, in the category of unital rings, the only ring $X$ for which there is an arrow $1\to X$ is the terminal ring itself). In such a case it is thus necessary to consider spans and pregroupoids or, equivalently, pairs of equivalence relations and connectors, to obtain a context that encompasses both the cases of internal groupoids and of abelian objects. This was our main motivation to consider the Galois structure on $2$-$\Eq(\C)$ rather than $\RG(\C)$.
\end{remark}

\begin{example}\label{compact}
The category $\Grp(\Comp)$ of compact (Hausdorff) groups is an exact Mal'tsev category which is monadic over the category of sets \cite{M76}. As it follows from Lemma $6.13$ in \cite{EG15}, the Huq commutator of two normal subobjects (=closed normal subgroups) $H$ and $K$ of a compact group $G$ is simply given by the closure $\overline{[H,K]}$ of the classical group-theoretic commutator $[H,K]$ of $H$ and $K.$ In the category $\Grp(\Comp)$ this Huq commutator of normal subobjects is the normal subobject associated with the categorical commutator \cite{P95} of the corresponding equivalence relations: this essentially follows from \cite{BG02}, by taking into account the fact that $\Grp(\Comp)$ is a strongly protomodular category \cite{B04}. From the previous example, it then follows that an extension
\[\begin{tikzcd}[ampersand replacement=\&] X_1 \ar[rr,"f"] \ar[dr,shift left,"d"] \ar[dr,shift right,"c"']\& \& Y_1 \ar[dl,shift left,"d'"] \ar[dl,shift right,"c'"'] \\ \& X_0 \& \end{tikzcd}\]
in $\RG( \Grp(\Comp) )/X_0$ is \emph{central} relatively to $\Grpd( \Grp(\Comp) )/X_0$ if and only if $\overline{[Ker [f] , Ker [d] \cdot Ker[c]]}=\{1\}$, where $Ker [f]$, $Ker [d]$ and $Ker [c]$ denote the kernels of $f$, $d$ and $c$ in the category $\Grp(\Comp)$, respectively, while $Ker [d] \cdot Ker[c]$ is the group-theoretic product of these normal closed subgroups, which is a compact (and thus closed) subgroup, since it is the image of the multiplication map
\[\cdot : Ker[d]\times Ker[c]\to X_1.\]
\end{example}

\section{Double central extensions}

As we explained in \cref{preliminaries}, the category $\CExt_\F(2$-$\Eq(\C))$ of $\Gamma_\F$-central extensions is reflective in $\Ext_\F(2$-$\Eq(\C))$, and \emph{double central extensions} are double extensions that are central with respect to this induced reflection.
We shall show that the double extensions that are central with respect to this induced reflection can be characterized by some natural conditions involving Smith-Pedicchio commutators.

These conditions are a generalization of the ones given by G. Janelidze in \cite{J91} characterizing double central extensions in $\Grp$, and later extended to Mal'tsev varieties by the second author and V. Rossi in \cite{GR04}, and to exact Mal'tsev categories by T. Everaert and T. Van der Linden \cite{EV10}. In fact, the proofs given below are suitably adapted from the ones appearing in these two papers.

\begin{lemma}\label{surjectivity} Consider the following pullback of a double extension $(g,j)$ along a double extension $(\gamma,\delta)$ in $2$-$\Eq(\C)$ depicted as
\begin{equation} \begin{tikzcd}[ampersand replacement=\&]
\X\times_{\Z} \U\ar[dr,"\alpha"']\ar[dd,"f'"']\ar[rr,"g'"] \& \& \U \ar[dr,"\gamma"]\ar[dd,"h'", near end] \& \\
\& \X \ar[rr,"g",crossing over,near start]\& \& \Z\ar[dd,"h"]\\
\Y\times_\W \V \ar[rr,"j'",near end] \ar[dr,"\beta"'] \& \& \V \ar[dr,"\delta"] \& \\
\& \Y \ar[from=uu,crossing over,"f"',near start]\ar[rr,"j"'] \& \& \W\end{tikzcd}\label{eq:PB_double}
\end{equation}
Then we have
$$\alpha([Eq[f'],Eq[g']])=[Eq[f],Eq[g]]$$
and
$$\alpha([Eq[f']\wedge Eq[g'],R'\vee S'])=[Eq[f]\wedge Eq[g],R\vee S],$$
where $\X=(X,R,S)$ and $\X\times_{\Z} \U=(X\times_{Z} U,R',S')$.
\end{lemma}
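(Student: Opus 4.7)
\emph{Plan.} The proof reduces both equalities to property (5) of the Pedicchio commutator --- $p([A,B])=[p(A),p(B)]$ for a regular epimorphism $p$ --- applied to the projection $\alpha\colon \X\times_{\Z}\U\to \X$. This projection is the pullback (in $2$-$\Eq(\C)$) of $\gamma\colon\U\to\Z$, which is a regular epimorphism in $2$-$\Eq(\C)$ as part of the double extension $(\gamma,\delta)$; hence $\alpha$ is itself a regular epimorphism in $2$-$\Eq(\C)$. By the characterization of regular epimorphisms in $2$-$\Eq(\C)$ recalled in the preliminaries, this gives $\alpha(R')=R$ and $\alpha(S')=S$ for free, and the underlying arrow is a regular epimorphism in $\C$. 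Property (5) then reduces the lemma to the identities $\alpha(Eq[f'])=Eq[f]$, $\alpha(Eq[g'])=Eq[g]$ and $\alpha(Eq[f']\wedge Eq[g'])=Eq[f]\wedge Eq[g]$, since $\alpha(R'\vee S')=\alpha(R')\vee\alpha(S')=R\vee S$ by preservation of joins under direct image along a regular epimorphism.

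The identity $\alpha(Eq[g'])=Eq[g]$ is immediate: because $g'$ is the pullback of $g$ along $\gamma$, the kernel pair $Eq[g']$ is canonically isomorphic to $Eq[g]\times_Z U$, and $\alpha$ restricts on it to the first projection, itself a pullback of $\gamma$ and therefore a regular epimorphism. The identity $\alpha(Eq[f']\wedge Eq[g'])=Eq[f]\wedge Eq[g]$ is equally direct using the generalized-element notation: the constraint imposed by $Eq[g']$ forces $u_1=u_2$, so a pair in the meet is of the form $((x_1,u),(x_2,u))$ with $(x_1,x_2)\in Eq[f]\wedge Eq[g]$; and conversely any such $(x_1,x_2)$ is lifted by choosing any $u$ with $\gamma(u)=g(x_1)=g(x_2)$, which exists since $\gamma$ is a regular epimorphism in $\C$.

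The main step is $\alpha(Eq[f'])=Eq[f]$, which is the only place where the double-extension hypothesis on $(\gamma,\delta)$ (and not merely the regular-epimorphy of $\gamma$) comes in. Given $(x_1,x_2)\in Eq[f]$, commutativity of the back face gives $h(g(x_1))=j(f(x_1))=j(f(x_2))=h(g(x_2))$. Picking any $u_1\in U$ with $\gamma(u_1)=g(x_1)$ and setting $v=h'(u_1)$, one has $\delta(v)=h(\gamma(u_1))=h(g(x_2))$, so $(g(x_2),v)\in Z\times_W V$. Now the double-extension property of $(\gamma,\delta)$ is precisely the statement that the comparison map $\langle h',\gamma\rangle\colon U\to V\times_W Z$ is a regular epimorphism, which allows lifting $(g(x_2),v)$ to some $u_2\in U$ with $\gamma(u_2)=g(x_2)$ and $h'(u_2)=h'(u_1)$. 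Then $((x_1,u_1),(x_2,u_2))\in Eq[f']$ projects under $\alpha$ to $(x_1,x_2)$, establishing the non-trivial inclusion $Eq[f]\subseteq \alpha(Eq[f'])$.

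Combining these direct-image computations with property (5) of the Pedicchio commutator yields both equalities of the lemma. The main obstacle is the verification of $\alpha(Eq[f'])=Eq[f]$: this is the only step that requires the full double-extension hypothesis on $(\gamma,\delta)$, whereas everything else reduces to the pullback-stability of regular epimorphisms in $2$-$\Eq(\C)$ and the corresponding action on pairs of equivalence relations.
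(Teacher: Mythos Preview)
Your proof is correct and shares the paper's overall reduction: both invoke property~(5) of the Pedicchio commutator to reduce the lemma to the direct-image identities $\alpha(Eq[f'])=Eq[f]$, $\alpha(Eq[g'])=Eq[g]$, $\alpha(R')=R$, $\alpha(S')=S$, and $\alpha(Eq[f']\wedge Eq[g'])=Eq[f]\wedge Eq[g]$, together with the preservation of joins under direct image along a regular epimorphism (left implicit in both treatments).

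The difference lies in how the nontrivial identities are verified. The paper works abstractly via the characterization of double extensions recalled around diagram~\eqref{property}: it observes that $\alpha(Eq[f'])=Eq[f]$ is equivalent to the left face $(\alpha,\beta)$ of the cube being a regular pushout, which holds because that face is the pullback of the double extension $(\gamma,\delta)$ along $(g,j)$ and regular pushouts are pullback-stable; for the meet it builds an auxiliary cube to identify the relevant square of coequalizers as a pullback. You instead argue with generalized elements, lifting successively along the regular epimorphisms $\gamma$ and $\langle h',\gamma\rangle$---the latter being exactly where the double-extension hypothesis on $(\gamma,\delta)$ enters your proof. Your treatment of the meet, recognising $Eq[f']\wedge Eq[g']$ as $(Eq[f]\wedge Eq[g])\times_Z U$ so that the projection to $Eq[f]\wedge Eq[g]$ is simply a pullback of $\gamma$, is in fact shorter than the paper's cube construction. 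Both routes are valid; yours is more concrete (and should be read in the internal regular logic of $\C$, where ``choosing $u$'' means passing to a regular-epi cover), while the paper's makes the role of pullback-stability of the class $\E^1$ explicit.
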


\begin{proof} Since morphisms in $\F$ are regular epimorphisms, and the pullbacks and regular epimorphisms in $2$-$\Eq(\C)$ are computed ``levelwise'', the cube \eqref{eq:PB_double} induces a cube in $\C$ that is also a pullback of regular pushouts.

Since the Pedicchio commutator is preserved by regular images in $\C$, all we have to show is that
$$\alpha(Eq[f'])=Eq[f], \quad \alpha(Eq[g'])=Eq[g], \quad \alpha(R')=R,$$
$$\alpha(S')=S\quad \text{and} \quad \alpha(Eq[f']\wedge Eq[g'])=Eq[f]\wedge Eq[g].$$
This holds by assumption for $R$ and $S$; by the property \eqref{property} recalled in \cref{preliminaries}, it suffices to prove in each case that the corresponding coequalizers are part of a regular pushout in $\C$. This is true for $f$ because the square
\[\begin{tikzcd}[ampersand replacement=\&] X\times_{Z} U\ar[r,"\alpha"]\ar[d,"f'"'] \& X\ar[d,"f"] \\ Y\times_{W} U \ar[r,"\beta"']\& Y \end{tikzcd}\]
is the pullback of a regular pushout in $\C$, and is thus a regular pushout itself, since regular pushouts are pullback stable in $\C$. It is also true for $g$ because the corresponding square (the top face of \eqref{eq:PB_double}) is a pullback.

Thus we only have to treat the case of the intersection $Eq[f]\wedge Eq[g]$; its coequalizer is the arrow $X\to Y\times_{W} Z$ induced by $f$ and $g$. Accordingly, the corresponding square of coequalizers is the left square in the commutative diagram

\[\begin{tikzcd}[ampersand replacement=\&] X \times_{Z}U \ar[r,"{\langle f',g'\rangle}"] \ar[d,"\alpha"] \& (Y\times_{W} V)\times_{V}U \ar[d,"\bar{\alpha}"]\ar[r]\& U \ar[d,"\gamma"]\\
X \ar[r,"{\langle f,g\rangle}"'] \& Y\times_{W} Z \ar[r] \& Z.\end{tikzcd}\]

Now the whole rectangle above is the top face of \eqref{eq:PB_double}, while the right square is the top face of the cube

\[\begin{tikzcd}[ampersand replacement=\&] (Y\times_{W}V)\times_{V}U \ar[dr,"\bar{\alpha}"']\ar[dd]\ar[rr] \& \& U \ar[dr,"\gamma"]\ar[dd,"h'",near end] \& \\
\& Y\times_{W} Z \ar[rr,crossing over]\& \& Z\ar[dd," h"]\\
Y\times_{W}V \ar[rr," j'",near start] \ar[dr,"\beta"'] \& \& V \ar[dr,"\delta"] \& \\
\& Y\ar[rr," j"'] \ar[from=uu,crossing over]\& \& W,\end{tikzcd}\]
and is then also a pullback, since the front and back faces are pullbacks by construction.\end{proof}

\begin{prop}In the diagram \eqref{eq:PB_double}, the front face satisfies the commutator conditions
\begin{equation}[Eq[f],Eq[g]]=\Delta_{X}=[Eq[f]\wedge Eq[g], R\vee S].\label{eq:double_commutator}\end{equation}
if and only if the back face does, i.e.
\begin{equation}[Eq[f'],Eq[g']]=\Delta_{X'}=[Eq[f']\wedge Eq[g'], R'\vee S'].\end{equation}
\label{prop:commutator_conditions}
\end{prop}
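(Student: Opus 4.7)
My plan is to deduce both implications directly from \Cref{surjectivity}, which yields the identities
\[\alpha([Eq[f'],Eq[g']])=[Eq[f],Eq[g]], \quad \alpha([Eq[f']\wedge Eq[g'], R'\vee S'])=[Eq[f]\wedge Eq[g], R\vee S].\]

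For the direction ``back implies front'', I would simply observe that $\alpha$ is a regular epimorphism (as the pullback of the fibration $\gamma$ along $g$), so that $\alpha(\Delta_{X\times_Z U})=\Delta_X$; the two displayed equalities then propagate the vanishing of the back-face commutators to the front face without further work.

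The converse, ``front implies back'', will be handled by a double-bound argument. On the one hand, the displayed equalities together with the front-face hypotheses show that both back-face commutators have trivial direct image under $\alpha$; since a regular image $\alpha(T)$ equals $\Delta_X$ precisely when $T\leq Eq[\alpha]$, both commutators are contained in $Eq[\alpha]$. On the other hand, the general inequality $[A,B]\leq A\wedge B$ from \Cref{preliminaries} ensures that both commutators lie inside $Eq[f']\wedge Eq[g']$, and in particular inside $Eq[g']$. The decisive step is then to notice that the top face of \eqref{eq:PB_double} is a pullback, so the joint map $\langle \alpha,g'\rangle\colon X\times_Z U\to X\times U$ is a monomorphism, whence $Eq[\alpha]\wedge Eq[g']=\Delta_{X\times_Z U}$. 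Combining the two bounds forces both back-face commutators to equal $\Delta_{X\times_Z U}$.

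I do not anticipate a serious obstacle, since the analytic content has already been absorbed into \Cref{surjectivity}. The only conceptual point worth emphasising is that there is no need to construct connectors on $X\times_Z U$ by hand: the pullback structure of the top face of the cube automatically supplies the trivial intersection $Eq[\alpha]\wedge Eq[g']=\Delta_{X\times_Z U}$, and it is precisely this which, combined with the containment in $Eq[\alpha]$ coming from the lemma, closes the argument.
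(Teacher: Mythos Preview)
Your proposal is correct and essentially identical to the paper's own proof: both directions are deduced from \Cref{surjectivity}, with the forward direction by pushing the trivial commutators along the regular epimorphism $\alpha$, and the converse by combining $[A,B]\leq Eq[\alpha]$ (from the lemma) with $[A,B]\leq Eq[g']$ (from property~(2) of the commutator) and using that the top face of the cube is a pullback, so $Eq[\alpha]\wedge Eq[g']=\Delta_{X\times_Z U}$.
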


\begin{proof}Suppose first that the back face satisfies the commutator conditions; then by Lemma \ref{surjectivity}
\[[Eq[f], Eq[g]]=\alpha([Eq[f'],Eq[g']])=\Delta_{X},\]
and similarly
\[[Eq[f]\wedge Eq[g],R\vee S]=\alpha([Eq[f']\wedge Eq[g'],R'\vee S'])=\Delta_{X}.\]

Assume then that the front face satisfies the commutator conditions; then since $\alpha([Eq[f'], Eq[g']])=\Delta_{X}$, we have
\[[Eq[f'] , Eq[g']]\leq Eq[\alpha];\]
and, moreover,
\[[Eq[f'],Eq[g']]\leq Eq[g']\]by property $(2)$ of the categorical commutator. Since the top face in the cube \eqref{eq:PB_double} is a pullback, $Eq[g']\wedge Eq[\alpha]=\Delta_{X\times_{Z} U}$, and thus
\[[Eq[f'],Eq[g']]=\Delta_{X\times_{Z} U}.\]
The proof for the other condition is similar.
\end{proof}

\begin{coro}Any double $\Gamma_\F$-central extension satisfies the commutator conditions \eqref{eq:double_commutator} in Proposition \ref{prop:commutator_conditions}.
\label{implication_1}
\end{coro}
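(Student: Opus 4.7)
The strategy is to combine \cref{prop:commutator_conditions} with the Galois-theoretic fact that every $\Gamma_{\F^1}$-central double extension is also $\Gamma_{\F^1}$-normal. Viewing a double $\Gamma_\F$-central extension $(f,g,h,j)$ as a morphism $(g,j)\colon f\to h$ in $\Ext_\F(2$-$\Eq(\C))$, one applies Everaert's theorem (recalled in \cref{preliminaries}) to the Galois structure induced by $\E^1$, and then descends to $\F^1$ via the observation made in the same section that double $\Gamma_\F$-centrality amounts to double $\Gamma_\E$-centrality together with membership in $\F^1$. This yields that $(g,j)$ is split by itself.

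One therefore fits the situation into a pullback cube of the form \eqref{eq:PB_double} with $(\gamma,\delta)=(g,j)$, so that the back face is given by the first projection of the kernel pair of $(g,j)$ in $\Ext_\F(2$-$\Eq(\C))$ and is a $\Gamma_{\F^1}$-trivial double extension. By \cref{prop:commutator_conditions}, the commutator conditions \eqref{eq:double_commutator} on the front face are equivalent to those on the back face, so it suffices to verify them on the trivial back face.

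For this last verification I would adapt the implication $(2)\Rightarrow(3)$ of \cref{Main2}: there, triviality of $\pi_1$ in the kernel pair of a single fibration produced an equality of the form $Eq[\pi_1]\wedge[\pi_2^{-1}(R),\pi_2^{-1}(S)]=\Delta$, from which $[Eq[f],R\vee S]=\Delta$ was extracted using symmetry, monotonicity, $[R,S]\leq R\wedge S$, and the preservation of Pedicchio commutators by regular images. In the cubical setting of the back face, triviality is converted into analogous intersection equations by means of the componentwise description of the reflector $I^1$ to $\CExt_\F(2$-$\Eq(\C))$ as quotients by the appropriate commutators; manipulations with the same commutator properties then deliver both $[Eq[f'],Eq[g']]=\Delta$ and $[Eq[f']\wedge Eq[g'],R'\vee S']=\Delta$ on the back face.

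The main obstacle lies precisely in this last step: keeping track of the correct commutators at each stage so that the two separate conditions of \eqref{eq:double_commutator} are produced cleanly. This bookkeeping is more delicate than in \cref{Main2} because two distinct commutator identities, corresponding to different intersections in the cube, must be extracted simultaneously from the single hypothesis that the back face is trivial; once they are in hand, a final appeal to \cref{prop:commutator_conditions} transfers them back to the front face.
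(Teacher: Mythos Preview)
Your outline is sound up to the point you yourself flag as an obstacle, but that final step is a real gap, not just bookkeeping. Extracting both commutator conditions directly from the triviality of the kernel-pair projection requires an explicit description of the reflector $I^1$ and a two-dimensional analogue of the argument in \Cref{Main2}; you do not carry this out, and it is not routine. In particular, the description of $I^1$ as ``quotients by the appropriate commutators'' is only available \emph{after} one knows the characterization of central extensions, and even then translating triviality of a double extension into the two separate identities of \eqref{eq:double_commutator} is not a straightforward adaptation of the one-dimensional case.

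The paper sidesteps this difficulty entirely by unfolding the definition of ``trivial'' one step further. A $\Gamma_{\F^1}$-trivial double extension is, by definition, a pullback of some double extension $(g'',j'')\colon f''\to h''$ lying in $\CExt_\F(2\text{-}\Eq(\C))$; that is, $f''$ and $h''$ are themselves $\Gamma_\F$-central single extensions. One then applies \Cref{prop:commutator_conditions} twice (once for the splitting pullback, once for the triviality pullback) to reduce to verifying \eqref{eq:double_commutator} for $(g'',j'')$. But now \Cref{Main2} gives $[Eq[f''],R''\vee S'']=\Delta_{X''}$ directly, and both conditions follow by monotonicity: $[Eq[f'']\wedge Eq[g''],R''\vee S'']\leq[Eq[f''],R''\vee S'']$, and $[Eq[f''],Eq[g'']]\leq[Eq[f''],R''\wedge S'']$ since $g''\in\F$ forces $Eq[g'']\leq R''\wedge S''$. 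Thus the paper never needs to compute $I^1$ or mimic the $(2)\Rightarrow(3)$ argument of \Cref{Main2} at the double level; it simply reuses the one-dimensional result. Your detour through normality is correct but unnecessary, and it leads you to a harder endgame than the direct unfolding of centrality and triviality.
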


\begin{proof}
By definition:
\begin{itemize}\item $(g,j)$ is a double $\Gamma_\F$-central extension if and only if it is split by a double extension $(\gamma,\delta)\in \F^2$;
\item $(g',j')$ is a double $\Gamma_\F$-trivial extension if and only if it is the pullback of some extension between $\Gamma_\F$-central extensions.\end{itemize}

Thus $(g,j)$ is a double $\Gamma_\F$-central extension if and only if there are two pullback squares in $\Ext_\F(2$-$\Eq(\C))$
\begin{equation*}
\begin{tikzcd}[ampersand replacement=\&] f' \ar[r,"{(g',j')}"] \ar[d,"{(\alpha,\beta)}"'] \& h'\ar[d,"{(\gamma,\delta)}"] \\ f \ar[r,"{(g,j)}"']\& h.
\end{tikzcd} \qquad \begin{tikzcd}[ampersand replacement=\&] f' \ar[r,"{(g',j')}"] \ar[d,"{(\alpha',\beta')}"'] \& h'\ar[d,"{(\gamma',\delta')}"] \\ f'' \ar[r,"{(g'',j'')}"']\& h''.
\end{tikzcd}
\end{equation*}
where $f''$ and $h''$ are $\Gamma_\F$-central extensions in $2$-$\Eq(\C)$. By \cref{prop:commutator_conditions}, $(g,j)$ satisfies the commutator conditions if and only if $(g',j')$ does if and only if $(g'',j'')$ does.

But since $f''$ is a central extension, we have by \cref{non-trivial}
\[[Eq[f'']\wedge Eq[g''],R''\vee S'']\leq [Eq[f''],R''\vee S'']\leq \Delta_{X''}\]
and
\[[Eq[f''],Eq[g'']]\leq [Eq[f''],R''\wedge S'']\leq \Delta_{X''},\]
where $Eq[g'']\leq R''\wedge S''$ because $g''$ is in $\F$.\end{proof}

We now turn to the proof of the converse implication.

\begin{prop} Any double extension in $\Ext_\F^2(\C)$ satisfying the commutator conditions \eqref{eq:double_commutator} is a double $\Gamma_\F$-central extension.\label{implication_2}
\end{prop}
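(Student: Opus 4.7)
The plan is to prove $\Gamma_\F^1$-normality of $(g,j)$; since central and normal coincide for this admissible Galois structure, this will imply $\Gamma_\F^1$-centrality. By definition, normality amounts to showing that the projections from the kernel pair of $(g,j)$ in $\Ext_\F(2\text{-}\Eq(\C))$ are $\Gamma_\F^1$-trivial.

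First, I would identify the kernel pair of $(g,j)\colon f\to h$ in $\Ext_\F$ as the commutative square
\[\begin{tikzcd}[ampersand replacement=\&]
Eq[g] \ar[d,"\tilde f"']\ar[r,shift left,"\pi_1"]\ar[r,shift right,"\pi_2"'] \& X \ar[d,"f"] \\
Eq[j] \ar[r,shift left,"\pi'_1"]\ar[r,shift right,"\pi'_2"'] \& Y,
\end{tikzcd}\]
where $Eq[g]$ is endowed with the double equivalence relations $R\square Eq[g]$ and $S\square Eq[g]$, and $Eq[j]$ with the corresponding $R''\square Eq[j]$ and $S''\square Eq[j]$; the induced fibration $\tilde f$ lies in $\F$ by the hypothesis that $(g,j)$ is a double extension. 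Since pullbacks in $\Ext_\F$ are computed levelwise in $2\text{-}\Eq(\C)$ and thence in $\C$, the triviality of $\pi_1\colon\tilde f\to f$ reduces to the statement that the two naturality squares for the reflection unit, at the $Eq[g]\to X$ and $Eq[j]\to Y$ levels, are pullbacks in $\C$.

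Setting $T_1=[Eq[f],R\vee S]$ and $T_2=[Eq[\tilde f],(R\square Eq[g])\vee(S\square Eq[g])]$, the key point is that the top-level reflection square is a pullback if and only if both $\pi_1(T_2)=T_1$ and $Eq[\pi_1]\wedge T_2=\Delta_{Eq[g]}$ hold. The first equality will follow automatically from property $(5)$ of the Pedicchio commutator together with the direct-image identities $\pi_1(Eq[\tilde f])=Eq[f]$ and $\pi_1((R\square Eq[g])\vee(S\square Eq[g]))=R\vee S$. The second equality is the crux, and it is where the two hypotheses \eqref{eq:double_commutator} come into play: expanding $T_2$ using distributivity of the commutator over joins and the decomposition $Eq[\tilde f]=\pi_1^{-1}(Eq[f])\wedge\pi_2^{-1}(Eq[f])$, one controls the ``off-diagonal'' contributions --- those capturing the interaction between the two projections of $Eq[g]$ --- via $[Eq[f],Eq[g]]=\Delta_X$, and the ``diagonal'' contributions --- where $Eq[\tilde f]$ is forced into $Eq[f]\wedge Eq[g]$ --- via $[Eq[f]\wedge Eq[g],R\vee S]=\Delta_X$. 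A symmetric argument yields the analogous statement at the $Eq[j]\to Y$ level, completing the proof that $\pi_1$ (and, by symmetry, $\pi_2$) is $\Gamma_\F^1$-trivial.

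The main obstacle I anticipate is precisely this commutator computation on the second-level object $Eq[g]$: one must carefully combine both hypotheses and track how each interacts with the join $(R\square Eq[g])\vee(S\square Eq[g])$ and the intersection $\pi_1^{-1}(Eq[f])\wedge\pi_2^{-1}(Eq[f])$, making essential use of the Mal'tsev property and exactness of $\C$ to ensure that the relevant direct images, joins and intersections of equivalence relations behave compatibly, and of the structural description of $R\square Eq[g]$ and $S\square Eq[g]$ recalled in \Cref{preliminaries}. Once these vanishings are established, the $\Gamma_\F^1$-normality of $(g,j)$ follows, and hence $(g,j)$ is a double $\Gamma_\F$-central extension, as required.
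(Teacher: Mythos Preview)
Your overall strategy---verify $\Gamma_\F^1$-normality of $(g,j)$ by showing that the projection $\pi_1\colon\tilde f\to f$ from the kernel pair is $\Gamma_\F^1$-trivial---is sound, and your reduction to the two conditions $\pi_1(T_2)=T_1$ and $Eq[\pi_1]\wedge T_2=\Delta_{Eq[g]}$ is correct (the first does follow from property~(5) of the commutator, and the bottom level is automatic since $I^1$ leaves codomains unchanged). However, the second condition is where the real work lies, and your sketch does not supply a workable argument. You invoke ``distributivity of the commutator over joins'' together with the decomposition $Eq[\tilde f]=\pi_1^{-1}(Eq[f])\wedge\pi_2^{-1}(Eq[f])$, but this decomposition is a \emph{meet}, and the Pedicchio commutator does \emph{not} distribute over meets. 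There is no evident way to split $T_2=[Eq[\tilde f],\pi_1^{-1}(R\vee S)]$ into ``diagonal'' and ``off-diagonal'' pieces controlled separately by the two hypotheses; the obvious bounds (e.g.\ $T_2\leq[\pi_i^{-1}(Eq[f]),\pi_1^{-1}(R\vee S)]$) do not relate usefully to $\pi_i^{-1}(T_1)$, since inverse images do not commute with commutators either. So as written, the proof has a genuine gap at its central step.

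The paper avoids this direct computation entirely by a different route. From $[Eq[f],Eq[g]]=\Delta_X$ it extracts the \emph{connector} and the associated pullback
\[
\begin{tikzcd}[ampersand replacement=\&]
Eq[f]\square Eq[g] \ar[r,"\pi"]\ar[d] \& Eq[f]\times_X Eq[g] \ar[d,"p"] \\
Eq[f]\wedge Eq[g] \ar[r,"p_1"'] \& X,
\end{tikzcd}
\]
so that the second hypothesis $[Eq[f]\wedge Eq[g],R\vee S]=\Delta_X$, via \Cref{Main2}, makes $p_1$ (and hence $\pi$) a $\Gamma_\F$-trivial extension. A cube argument using preservation of pullbacks of split epis by $I$ then shows that the kernel-pair square of the auxiliary double extension $(\bar g,g)\colon(\pi_1\colon Eq[f]\to X)\to(\pi_1\colon Eq[h]\to Z)$ is double trivial; hence that auxiliary extension is double central. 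Finally, the original $(g,j)$ is obtained from it as the front face of a triple-extension cube, so the strongly Birkhoff property of $\CExt^2_\F$ transfers double centrality. The point is that the connector pullback lets one trade the intractable commutator $T_2$ on $Eq[g]$ for the first-level condition on $Eq[f]\wedge Eq[g]$, where \Cref{Main2} applies directly---this is the key idea your proposal is missing.
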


\begin{proof}
As a consequence of the "denormalized $3\times 3$ lemma" from \cite{B03}, if we take a double extension \eqref{eq:double_ext} and take all the kernel pairs horizontally and vertically in $2$-$\Eq(\C)$, and then the kernel pairs of the induced arrow, we obtain a diagram
\begin{equation}
\begin{tikzcd}[ampersand replacement=\&] Eq[f]\square Eq[g]\ar[r,shift left,"q_1"]\ar[r,shift right,"q_2"'] \ar[d,shift left,"p_2"]\ar[d,shift right,"p_1"'] \& Eq[g] \ar[r,"\bar{f}"] \ar[d,shift left] \ar[d,shift right]\& Eq[j] \ar[d,shift left] \ar[d,shift right]\\
Eq[f] \ar[r,shift left]\ar[r,shift right] \ar[d,"\bar{g}"']\& \X \ar[r,"f"]\ar[d,"g"] \& \Y \ar[d,"j"] \\
Eq[h] \ar[r,shift left]\ar[r,shift right] \& \Z \ar[r,"h"'] \& \W.
\end{tikzcd}\label{eq:3x3}
\end{equation}

As explained in \cref{preliminaries}, the condition that $[Eq[f],Eq[g]]=\Delta_{X}$ is equivalent to the existence of a section for the comparison arrow $Eq[f]\square Eq[g]\to Eq[f]\times_X Eq[g]$ in $\C$ satisfying certain conditions. Since $\F$ is stable under pullback, all the arrows involved induce isomorphisms on the quotients by $R$ and $S$; thus this section in $\C$ induces a similar section at the level of $R$ and $S$. Thus we have a section in $2$-$\Eq(\C)$, and thus a connector $p:Eq[f]\times_{\X}Eq[g]\to \X$. In particular, we have the following pullback of split epimorphisms in $2$-$\Eq(\C)$:
\[\begin{tikzcd}[ampersand replacement=\&] Eq[f] \square Eq[g] \ar[d,"x"'] \ar[r,"\pi"] \& Eq[f]\times_{\X} Eq[g] \ar[d,"p"] \\
Eq[f]\wedge Eq[g] \ar[r,"p_1"'] \& \X.
\end{tikzcd}\]
Now the condition $[Eq[f]\wedge Eq[g],R\vee S]=\Delta_{X}$ exactly means that the arrow $\X \to \Y\times_{\W} \Z$ is a $\Gamma_\F$-central - and then $\Gamma_\F$-normal - extension (\Cref{Main2}). It follows in particular that the first projection $p_1:Eq[f]\wedge Eq[g]\to \X$ of its kernel pair is a $\Gamma_\F$-trivial extension. As a consequence, the arrow $\pi: Eq[f]\square Eq[g]\to Eq[f]\times_{\X}Eq[g]$ is also a $\Gamma_\F$-trivial extension.

Now consider the following cube:

\[\begin{tikzcd}[ampersand replacement=\&]
Eq[f]\square Eq[g] \ar[dr,"\eta_{Eq[f]\square Eq[g]}"']\ar[dd,"q_1"']\ar[rr,"p_1"] \& \& Eq[f] \ar[dr,"\eta_{Eq[f]}"]\ar[dd,"\pi_1",near end] \& \\
\& I(Eq[f]\square Eq[g]) \ar[rr,"I(p_1)",crossing over,near start]\& \& I(Eq[f])\ar[dd,"I( \pi_1)"]\\
Eq[g] \ar[rr,"\pi_1"',near end] \ar[dr,"\eta_{Eq[g]}"'] \& \& \X \ar[dr,"\eta_\X"] \& \\
\& I(Eq[g]) \ar[from=uu,crossing over,"I(q_1)"',near start]\ar[rr,"I(\pi_1)"'] \& \& I(\X)
\end{tikzcd}\]

Since $\Conn(\C)$ is a strongly Birkhoff subcategory of $2$-$\Eq(\C)$ and every regular epimorphism is an effective descent morphism, the reflector $I:2$-$\Eq(\C)\to \Conn(\C)$ preserves pullbacks of split epimorphisms along split epimorphisms (see \cite{E14}), so that
$$I(Eq[f]\times_\X Eq[g])=I(Eq[f])\times_{I(\X)} I(Eq[g]).$$

Moreover, since $\pi$ is a $\Gamma_\F$-trivial extension, the induced square to the pullbacks
\[\begin{tikzcd}[ampersand replacement=\&] Eq[f]\square Eq[g]\ar[r,"\pi"]\ar[d,"\eta_{Eq[f]\square Eq[g]}"'] \& Eq[f]\times_\X Eq[g] \ar[d,"\eta_{Eq[f]\times_\X Eq[g]}"]\\ I(Eq[f]\square Eq[g])\ar[r,"I(\pi)"'] \& I(Eq[f]\times_\X Eq[g])\end{tikzcd}\]
is itself a pullback, and thus the whole cube above is the limit of the diagram formed by its front, right and bottom faces. In particular, the induced square
\[\begin{tikzcd}[ampersand replacement=\&] Eq[f]\square Eq[g] \ar[r,"p_1"] \ar[d]\& Eq[f] \ar[d] \\ Eq[g]\times_{I(Eq[g])} I(Eq[f]\square Eq[g]) \ar[r]\& \X \times_{I(\X)} I(Eq[f])\end{tikzcd}\]
is also a pullback, and thus the square
\[\begin{tikzcd}[ampersand replacement=\&] Eq[f]\square Eq[g] \ar[r,"p_1"] \ar[d,"q_1"']\& Eq[f] \ar[d,"\pi_1"] \\ Eq[g] \ar[r,"\pi_1"']\& \X\end{tikzcd}\]
is a pullback of a double extension between two $\Gamma_\F$-trivial (hence $\Gamma_\F$-central) extension; it is thus a double $\Gamma_\F$-trivial extension. Since it is the kernel pair of the (horizontal) double extension
\[\begin{tikzcd}[ampersand replacement=\&] Eq[f] \ar[r,"\bar{g}"] \ar[d,"\pi_1"']\& Eq[h] \ar[d,"\pi_1"] \\ \X \ar[r,"g"']\& \Z\end{tikzcd}\]
this latter extension is $\Gamma_\F$-normal, and then $\Gamma_\F$-central. We can then consider the cube
\[\begin{tikzcd}[ampersand replacement=\&]
Eq[f]\ar[dr,"\pi_2"']\ar[dd,"\pi_1"']\ar[rr,"\bar{g}"] \& \& Eq[h] \ar[dr,"\pi_2"]\ar[dd,"\pi_1",near end] \& \\
\& \X \ar[rr,"g",crossing over,near start]\& \& \Z\ar[dd,"h"]\\
\X \ar[rr,"g"',near end] \ar[dr,"f"'] \& \& \Z \ar[dr,"h"] \& \\
\& \Y \ar[from=uu,crossing over,"f"',near start]\ar[rr,"j"'] \& \& \W.
\end{tikzcd}\]
Since it is a triple extension (as a pullback of double extensions) and the back face is a double $\Gamma_\F$-central extension, the front face is a double $\Gamma_\F$-central extension, which concludes the proof.
\end{proof}

We mentioned above that the commutator conditions were a generalisation of those given for the double central extensions relative to the subcategory of abelian objects \cite{EV10}. In fact, that characterisation is a special case of ours:
\begin{coro}\label{double-abelian} Let $\C$ be an exact Mal'tsev category with coequalisers. Then a double extension is central with respect to the subcategory of abelian objects in $\C$ if and only if
\[[Eq[f],Eq[g]]=\Delta_X=[Eq[f]\wedge Eq[g],\nabla_X].\]
\end{coro}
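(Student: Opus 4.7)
The plan is to reduce \Cref{double-abelian} to the combination of \Cref{implication_1} and \Cref{implication_2} via the full embedding $\C \hookrightarrow 2\text{-}\Eq(\C)$, $X \mapsto (X, \nabla_X, \nabla_X)$, already set up in \Cref{ex:abelian}.

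First I would check that this embedding transports the Galois structure for abelianization in $\C$ to (the restriction of) $\Gamma_\F$ on $2\text{-}\Eq(\C)$, at the level of double extensions. Between objects of the form $(X,\nabla_X,\nabla_X)$, the fibrations in $\F$ coincide with the regular epimorphisms of $\C$, since $f^{-1}(\nabla_{X'})=\nabla_X$ holds automatically; and a commutative square of regular epis in $\C$ is a double extension in $\C$ if and only if, regarded in $2\text{-}\Eq(\C)$, it is a double extension in $\F^1$. Moreover, \Cref{ex:abelian} already identifies, at the single-extension level, the central extensions in $\C$ relative to $\Ab(\C)$ with the $\Gamma_\F$-central extensions between objects of this form. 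Since the embedded subcategory is closed under limits and regular quotients, and since the reflector $\Ext_\F(2\text{-}\Eq(\C))\to \CExt_\F(2\text{-}\Eq(\C))$ is realized by quotients by commutators, the centralization of an extension coming from $\C$ again comes from $\C$. Hence a double extension in $\C$ is double central with respect to $\Ab(\C)$ if and only if it is a double $\Gamma_\F$-central extension in $2\text{-}\Eq(\C)$.

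By \Cref{implication_1} and \Cref{implication_2}, this last condition is in turn equivalent to the commutator conditions \eqref{eq:double_commutator} for $(R,S)=(\nabla_X,\nabla_X)$; since then $R\vee S=\nabla_X$, these specialize exactly to $[Eq[f],Eq[g]]=\Delta_X=[Eq[f]\wedge Eq[g],\nabla_X]$, which is the stated condition. The main subtlety I anticipate is the rigorous verification of the identification of reflectors, i.e.\ checking that $I^1$ preserves the property of ``coming from $\C$''; once closure of the embedded subcategory under the relevant quotients is observed, this reduces to the universal property of the reflection and no further Mal'tsev-specific input is required.
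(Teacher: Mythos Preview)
Your proposal is correct and follows essentially the same route as the paper: both reduce to \Cref{implication_1} and \Cref{implication_2} by specializing $R=S=\nabla_X$ via the embedding of \Cref{ex:abelian}, so that $R\vee S=\nabla_X$ yields exactly the stated commutator conditions. The paper's proof is terser and leaves implicit the verifications you spell out (that the embedding transports double extensions, central extensions, and the centralization reflector); your care about $I^1$ preserving ``coming from $\C$'' is a legitimate point, but as you note it follows immediately from the closure of the embedded subcategory under quotients.
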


\begin{proof}As we have explained for one-dimensional extensions in \Cref{ex:abelian}, the reflection of $\C$ into its subcategory of abelian objects is the special case where $R=\nabla_X=S$. In particular, the results above give us that a double extension is central if and only if
\[[Eq[f]\wedge Eq[g],\nabla_{X}]=[Eq[f]\wedge Eq[g],\nabla_X \vee \nabla_X ] = \Delta_X=[Eq[f],Eq[g]].\]
\end{proof}

\begin{coro}\label{doublecentralgraph}
A double extension
\begin{equation}\label{eq:double_ext_graph}\begin{tikzcd}[ampersand replacement=\&] \X\ar[d,"f"'] \ar[r,"g"]\& \Z\ar[d,"h"] \\ \Y\ar[r,"j"'] \& \W\end{tikzcd}\end{equation}
in $\RG(\C)/X_0$ is central if and only if
\[[Eq[f]\wedge Eq[g],Eq[c]\vee Eq[d]] = \Delta_{X_1}=[Eq[f],Eq[g]],\]
where $\X=(X_1,X_0,c,d,i)$.
\end{coro}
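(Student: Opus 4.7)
The plan is to deduce the corollary from the general characterization of double $\Gamma_\F$-central extensions in $2$-$\Eq(\C)$ obtained by combining Corollary \ref{implication_1} and Proposition \ref{implication_2}, by transferring the statement along the embedding $\RG(\C)/X_0 \hookrightarrow 2$-$\Eq(\C)$ described in Example \ref{ex:spans}.

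First I will unpack the embedding: a reflexive graph $\X = (X_1, X_0, c, d, i)$ corresponds to the object $(X_1, Eq[c], Eq[d])$ of $2$-$\Eq(\C)$, the subcategory $\RG(\C)/X_0$ is closed under pullbacks, and since every arrow over $X_0$ satisfies $Eq[f] \leq Eq[c] \wedge Eq[d]$, every regular epimorphism in $\RG(\C)/X_0$ is automatically a fibration. Consequently the double extension \eqref{eq:double_ext_graph}, viewed in $2$-$\Eq(\C)$, belongs to $\F^2$, and its horizontal and vertical kernel pairs are computed identically in both categories.

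Next I will identify the two notions of centrality. A reflexive graph over $X_0$ lies in $\Grpd(\C)/X_0$ exactly when the associated pair $(Eq[c], Eq[d])$ is connected, i.e.\ when the corresponding object of $2$-$\Eq(\C)$ lies in $\Conn(\C)$; the reflector onto $\Grpd(\C)/X_0$ is therefore the restriction of the reflector $I \colon 2$-$\Eq(\C) \to \Conn(\C)$. Because trivial, normal and central extensions are characterized by pullback conditions that are preserved by the embedding, centrality in $\RG(\C)/X_0$ coincides with $\Gamma_\F$-centrality when viewed in $2$-$\Eq(\C)$; iterating the argument one dimension higher, at the level of $\Ext_\F$ and its reflection onto $\CExt_\F$, yields the same equivalence for double central extensions.

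With this identification in hand, I will apply Corollary \ref{implication_1} and Proposition \ref{implication_2} with $R = Eq[c]$ and $S = Eq[d]$ to conclude that the centrality of \eqref{eq:double_ext_graph} is equivalent to
\[[Eq[f], Eq[g]] = \Delta_{X_1} = [Eq[f] \wedge Eq[g], Eq[c] \vee Eq[d]],\]
which is exactly the claimed commutator condition. The main subtlety is the second step: since $\RG(\C)/X_0$ sits inside $2$-$\Eq(\C)$ only as a non-full subcategory, one must check carefully that the Galois-theoretic notions of triviality, normality and (double) centrality really do descend through the embedding, which ultimately rests on the fact that the relevant pullbacks, kernel pairs, reflector units, and classes $\F^n$ are computed identically on both sides.
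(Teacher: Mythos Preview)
Your proposal is correct and follows essentially the same approach as the paper: reduce to the general characterization (Corollary~\ref{implication_1} and Proposition~\ref{implication_2}) via the embedding of $\RG(\C)/X_0$ into $2$-$\Eq(\C)$ from Example~\ref{ex:spans}, taking $R=Eq[c]$ and $S=Eq[d]$. The paper's own proof is terser, simply invoking Example~\ref{ex:spans} for the identification of centrality and then specializing, whereas you spell out more carefully why the Galois-theoretic notions transfer along the non-full embedding and why this persists one dimension higher---a point the paper handles only implicitly.
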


\begin{proof}
As in \Cref{ex:spans}, any double extension in $\RG(\C)/X_0$ induces a double extension in $2$-$\Eq(\C)$, and it is $\Gamma_\F$-central if and only if the double extension \eqref{eq:double_ext_graph} is central with respect to the reflection $\RG(\C)/X_0\to \Grpd(\C)/X_0$. Thus it suffices to take $R=Eq[c]$ and $S=Eq[d]$.
\end{proof}

\begin{example} Let us consider once again the category $\Comp(\Grp)$ of compact (Hausdorff) groups. The meet $Ker(f)\wedge Ker(g)$ of two kernels is their set-theoretic intersection, since the intersection of two closed subgroups is a closed (hence compact) subgroup. We then find that a double extension \eqref{eq:double_ext_graph} is central if and only if
\[\overline{[Ker(f)\wedge Ker(g),Ker(c)\cdot Ker(d)]}=\{1\}=\overline{[Ker(f),Ker(g)]}.\]
\end{example}

\begin{example}\label{Lie}
Let $B$ be a Lie algebra; we recall that a precrossed $B$-module of Lie algebras is a triple $(L,\xi,\partial)$ where $L$ is a Lie algebra, $\xi:B\times L\to L:(b,l)\mapsto {^b l}$ is an action of Lie algebras, and $\partial:L\to B$ is a morphism of Lie algebras such that $\partial ( {^b} l)=[b,\partial(l)]$ for all $b\in B$ and $l\in L$. A precrossed $B$-module is said to be a crossed module if, moreover, it satisfies Peiffer's identity $[l,l']= {^{\partial (l)}}l'$ for all $l,l'\in L$. It is known that the category of precrossed $B$-modules of Lie algebras is equivalent to the category $\RG(\Lie)/B$ of reflexive graphs in Lie algebras over $B$, and that this equivalence restricts to an equivalence between crossed modules and groupoids over $B$ \cite{LR80,P87}. Thus our results can be used to characterize central extensions relative to the reflection $\PXMod(\Lie)/B\to \XMod(\Lie)/B$.

Let $f:(L,\xi,\partial)\to (L',\xi',\partial')$ be a surjective morphism of precrossed $B$-modules. Then under the equivalence, $f$ becomes the regular epimorphism
\begin{equation}\begin{tikzcd}[ampersand replacement=\&] B\ltimes L \ar[rr,"1_B\ltimes f"]\ar[dr,shift left,"\pi_B"]\ar[dr, shift right,"{(1,\partial)}"']\& \& B\ltimes L' \ar[dl,shift left, "\pi_B"]\ar[dl, shift right,"{(1,\partial')}"']\\ \& B \& \end{tikzcd} \label{graphes_Lie} \end{equation}
(where $1_B\ltimes f$ is the map $(b,l)\mapsto (b,f(l))$ and $(1_B,\partial)$ is the map $(b,l)\mapsto b+\partial{l}$, for any $b\in B$ and any $l\in L$).

Now, by the results of \Cref{ex:spans}, $1_B\ltimes f$ is a central extension if and only if
$$[Eq[1_B\ltimes f],Eq[\pi_B]\vee Eq[(1,\partial)] ]=\Delta_{B\ltimes L}.$$
In the category of Lie algebras the commutator of equivalence relations is the equivalence relation corresponding to the commutator of the corresponding ideals (\cite{S76}), and this condition is then equivalent to
\begin{equation}\label{eq:comm_croise} [Ker(1_B\ltimes f),Ker(\pi_B)+Ker((1_B,\partial))]=0 .\end{equation}
One verifies that the object part of the kernel of $1_B\ltimes f$ is $Ker(f)$, with inclusion given by the composite $Ker(f)\to L\to B\ltimes L$, and that the kernel of $\pi_B$ is simply $L$. Moreover, one easily sees that
$$Ker((1_B,\partial)) = \{(-\partial(l),l)\mid l\in L\}.$$

As a consequence, we find that $[Ker(1_B\ltimes f),Ker(\pi_B)]$ is the ideal of $B\ltimes L$ generated by terms of the form
\[[(0,k),(0,l)]=(0, [k,l])\]
for $k\in Ker(f)$ and $l\in L$, while $[Ker(1_B\ltimes f),Ker((1_B,\partial))]$ by terms of the form
\[[(0,k),(-\partial(l),l)]=(0, ^{\partial(l)}k +[k,l]).\]
It follows that the commutator described in \eqref{eq:comm_croise} can be seen as an ideal of $L$, more precisely the subspace generated by terms of the form $[k,l]$ or $ ^{\partial(l)} k$, for $k\in Ker(f)$ and $l\in L$. By analogy with the case of precrossed modules of groups \cite{CE89}, we call this subobject the \emph{Peiffer commutator} $\langle Ker(f),L\rangle$ of $Ker(f)$ and $L$.

Let us now consider a double extension
\begin{equation}\begin{tikzcd}[ampersand replacement=\&] (L_1,\partial_1,\xi_1) \ar[r,"g"] \ar[d,"f"'] \& (L_2,\partial_2,\xi_2) \ar[d,"h"]\\ (L_3,\partial_3,\xi_3) \ar[r,"j"'] \& (L_4,\partial_4,\xi_4)
\end{tikzcd}\label{eq:double_ext_Lie}\end{equation}
of precrossed $B$-modules of Lie algebras. Using again the equivalence between $B$-precrossed modules and internal reflexive graphs over $B$ in $\Lie$, we find that \eqref{eq:double_ext_Lie} is central if and only if
\[\begin{tikzcd}[ampersand replacement=\&] B\ltimes L_1 \ar[r,"{1_B\ltimes g}"] \ar[d,"{1_B\ltimes f}"'] \& B\ltimes L_2 \ar[d,"{1_B\ltimes h}"]\\ B\ltimes L_3 \ar[r,"{1_B\ltimes j}"'] \& B\ltimes L_4
\end{tikzcd}\]
is a double central extension in $\RG(\Lie)/B$ (with the graph structures on each object given as in \eqref{graphes_Lie}), which by \Cref{doublecentralgraph} is equivalent to
\begin{equation} [Ker(1_B\ltimes f)\cap Ker(1_B\ltimes g),Ker(\pi_B)+Ker((1_B,\partial_1))]=0\label{eq:comm_croise_2_1}\end{equation}
and
\begin{equation}[Ker(1_B\ltimes f),Ker(1_B\ltimes g)]=0.\label{eq:comm_croise_2_2}\end{equation}
Again, the object parts of the kernels of $1_B\ltimes f$ and $1_B\ltimes g$ are simply $Ker(f)$ and $Ker(g)$ respectively, and so the commutator in equation \eqref{eq:comm_croise_2_2} is generated by terms of the form
$$[(0,k),(0,k')]=(0,[k,k']),$$
where $k\in Ker(f)$ and $k'\in Ker(g)$. Since the commutator in equation \eqref{eq:comm_croise_2_1} can be treated exactly as the commutator in \eqref{eq:comm_croise}, we find that the double extension \eqref{eq:double_ext_Lie} is central if and only if
$$\langle Ker(f)\wedge Ker(g),X \rangle =0=[Ker(f),Ker(g)],$$
where both sides are ideals of $L_1$.
\end{example}

\bibliography{biblio}
\bibliographystyle{abbrv}

\end{document}